\newcommand{\bcen}{\begin{center}}     \newcommand{\ecen}{\end{center}}
\newcommand{\bay}{\begin{array}}      \newcommand{\eay}{\end{array}}
\newcommand{\beq}{\begin{eqnarray*}}      \newcommand{\eeq}{\end{eqnarray*}}
\def\dim{\mathrm{dim}}
\def\End{\mathrm{End}}
\def\Ext{\mathrm{Ext}}
\def\gl{\mathrm{gl.dim}}
\def\Hom{\mathrm{Hom}}
\def\mod{\mathrm{mod}}
\def\Mod{\mathrm{Mod}}
\def\pd{\mathrm{pd}}
\def\proj{\mathrm{proj}}
\def\rad{\mathrm{rad}}
\def\RHom{\mathrm{RHom}}
\def\thick{\mathrm{thick}}
\begin{document}

\newtheorem{theorem}{Theorem}
\newtheorem{proposition}{Proposition}
\newtheorem{lemma}{Lemma}
\newtheorem{corollary}{Corollary}
\newtheorem{remark}{Remark}
\newtheorem{example}{Example}
\newtheorem{definition}{Definition}
\newtheorem*{conjecture}{Conjecture}
\newtheorem{question}{Question}

\title{\large\bf Jordan-H\"{o}lder theorems for derived categories
of derived discrete algebras}

\author{\large Yongyun Qin}

\date{\footnotesize
Chinese Academy of Sciences, Beijing 100190, P.R. China.\\ E-mail:
qinyongyun2006@126.com}

\maketitle

\begin{abstract}
For any positive integer $n$, $n$-derived-simple derived discrete
algebras are classified up to derived equivalence. Furthermore, the
Jordan-H\"{o}lder theorems for all kinds of derived categories of
derived discrete algebras are obtained.
\end{abstract}

\medskip

{\footnotesize {\bf Mathematics Subject Classification (2010)}:
16G10; 18E30}

\medskip

{\footnotesize {\bf Keywords} : derived discrete algebra;
$n$-derived-simple algebra; $n$-composition-series;
$n$-composition-factor; 2-truncated cycle algebra. }

\section{\large Introduction}

\indent\indent Throughout this paper, $k$ is an algebraically closed
field and all algebras are finite dimensional associative
$k$-algebras with identity. Recollements of triangulated categories
were introduced by Beilinson, Bernstein and Deligne \cite{BBD82},
and play an important role in algebraic geometry and representation
theory. Recollements, more general $n$-recollements \cite{HQ14}, of
derived categories of algebras provide a reduction technique for
some homological properties such as the finiteness of global
dimension \cite{Wie91,Koe91,AKLY13}, the finiteness of finitistic
dimension \cite{Hap93,CX14}, the finiteness of Hochschild dimension
\cite{Han14} and homological smoothness \cite{HQ14}, and some
homological invariants such as $K$-theory
\cite{TT90,Yao92,Nee04,Sch06,CX12,AKLY13}, Hochschild homology and
cyclic homology \cite{Kel98} and Hochschild cohomology \cite{Han14}.
Moreover, some homological conjectures such as the finitistic
dimension conjecture \cite{Hap93,CX14}, the Hochschild homology
dimension conjecture \cite{HQ14}, the Cartan determinant conjecture
\cite{HQ14}, the Gorenstein symmetry conjecture \cite{HQ14} and the
(dual) vanishing conjecture \cite{Y14}, can be reduced to
$n$-derived-simple algebras by $n$-recollements for appropriate $n
\geq 1$. A recollement of derived categories of algebras is a short
exact sequence of derived categories of algebras, and thus leads to
the concepts of composition series and composition factors and
Jordan-H\"{o}lder theorem analogous to those in group theory and
module theory. Indeed, the composition factors in this context are
the derived categories of derived simple algebras which were studied
in \cite{Wie91,Happ91,LY12,LY13,AKLY13}. Meanwhile, the
Jordan-H\"{o}lder theorem of derived category was established for
hereditary algebras \cite{AKLO12} and later for piecewise hereditary
algebras \cite{AKL12}. Nonetheless, this theorem was disproved for
certain infinite dimensional algebras \cite{CX11,CX12}, and later
for a finite dimensional algebra \cite{AKLY13}. Hence, the
Jordan-H\"{o}lder theorem of the derived category of an algebra is a
little bit subtle.

Derived discrete algebras were introduced by Vossieck \cite{Voss01},
which are crucial in the Brauer-Thrall type theorems for derived
category \cite{HZ13} and were explored in
\cite{BGS04,Bobi11,BPP13,BPP14,HanZ13,BK14}. In particular, Vossieck
proved that a basic connected derived discrete algebra is isomorphic
to either a piecewise hereditary algebra of Dynkin type or some
one-cycle gentle algebra which does not satisfying the
clock-condition \cite{Voss01}. The latter was further classified up
to derived equivalence by Bobi\'{n}ski, Gei{\ss} and Skowro\'{n}ski
\cite{BGS04}. Synthesizing the relevant results of Vossieck
\cite{Voss01}, Happel \cite{Hap88} and Bekkert-Merklen \cite{BM03},
all indecomposable objects in the bounded derived category of a
derived discrete algebra are fairly clear.

In this paper, we will classify all $n$-derived-simple derived
discrete algebras up to derived equivalence, and prove the
Jordan-H\"{o}lder theorems for all kinds of derived categories of
derived discrete algebras. More precisely, we will show that, for
any positive integer $n$, a derived discrete algebra is
$n$-derived-simple if and only if it is derived equivalent to the
underlying field $k$ or a 2-truncated cycle algebra. In order to
obtain simultaneously the Jordan-H\"{o}lder theorems for unbounded,
upper bounded, lower bounded, bounded derived categories of derived
discrete algebras, we will introduce $n$-composition-series and
$n$-composition-factors of the unbounded derived category of an
algebra. Furthermore, we will prove that the unbounded derived
category of any derived discrete algebra admits an
$n$-composition-series whose $n$-composition-factors are independent
on the choice of $n$-composition-series up to reordering and derived
equivalence. In particular, the Jordan-H\"{o}lder theorem holds for
the unbounded (resp. upper bounded, lower bounded, bounded) derived
category of any derived discrete algebra.

This paper is organized as follows: In section 2, we will study the
$n$-recollements of derived categories of derived discrete algebras.
In Section 3, we will classify all $n$-derived-simple derived
discrete algebras up to derived equivalence. In section 4, we will
define the $n$-composition-series and $n$-composition-factors of the
unbounded derived category of an algebra, and prove the
Jordan-H\"{o}lder theorems for derived categories of derived
discrete algebras.

\section{\large $n$-recollements on derived discrete algebras}

\indent\indent In this section, we will study the $n$-recollements
of derived categories of derived discrete algebras.

Let $A$ be an algebra. Denote by $\Mod A$ the category of right
$A$-modules, and by $\mod A$ (resp. $\proj A$) its full subcategory
consisting of all finitely generated modules (resp. finitely
generated projective modules). Denote by $\mathcal{D}(\Mod A)$ or
just $\mathcal{D}A$ the unbounded derived category of $A$. Denote by
$\mathcal{D}^b(\mod A)$ or just $\mathcal{D}^b(A)$ the bounded
derived category of $A$. Denote by $K^b(\proj A)$ the homotopy
category of cochain complexes of objects in $\proj A$. An object $X$
in $\mathcal{D}A$ is said to be {\it compact} if
$\Hom_{\mathcal{D}A}(X,-)$ commutes with direct sums. It is
well-known that up to isomorphism the objects in $K^{b}(\proj A)$
are precisely all the compact objects in $\mathcal{D}A$ (Ref.
\cite{BN}).

An algebra $A$ is said to be {\it derived discrete} provided for
every vector $\mathbf{d} = (d_p)_{p \in \mathbb{Z}} \in
\mathbb{N}^{(\mathbb{Z})}$ there are only finitely many isomorphism
classes of indecomposable objects $X$ in $\mathcal{D}^b(A)$ of
cohomology dimension vector $(\dim_kH^p(X))_{p \in \mathbb{Z}}
\linebreak = \mathbf{d}$ (cf. \cite{Voss01}). Some characterizations
of derived discrete algebras in terms of (global) cohomological
range and (global) cohomological length are provided in \cite{HZ13}.

Vossieck proved in \cite{Voss01} that a basic connected derived
discrete algebra is isomorphic to either a piecewise hereditary
algebra of Dynkin type or a one-cycle gentle algebra not satisfying
the clock-condition. The latter was shown by
Bobi\'{n}ski-Gei{\ss}-Skowro\'{n}ski in \cite{BGS04} to be derived
equivalent to a gentle algebra $\Lambda(r,s,t)$ with $1 \leq r \leq
s$ and $t \geq 0$, which is the bound quiver algebra \cite{ASS06}
given by the quiver $Q(s,t) :=$
$$\xymatrix{ &&&& 1 \ar[r]^-{\alpha_{1}}
& \cdots \ar[r]^-{\alpha_{s-r-2}} &  s-r-1 \ar[dr]^-{\alpha_{s-r-1}}
& \\ (-t) \ar[r]^-{\alpha_{-t}} & \cdots \ar[r]^-{\alpha_{-2}} &
(-1) \ar[r]^-{\alpha_{-1}} & 0 \ar[ur]^-{\alpha_0} &&&& s-r
\ar[dl]^-{\alpha_{s-r}} \\ &&&& s-1 \ar[ul]^-{\alpha_{s-1}} & \cdots
\ar[l]^-{\alpha_{s-2}} & s-r+1 \ar[l]^-{\alpha_{s-r+1}} & }$$ with
the relations $\alpha_{s-1}\alpha_0, \alpha_{s-2}\alpha_{s-1},
\cdots , \alpha_{s-r}\alpha_{s-r+1}$. Thus, up to derived
equivalence, a derived discrete algebra can be decomposed as
$\oplus^m_{p=1}A_p$ where $m \geq 1$ and $A_p =
\Lambda(s_p,s_p,t_p)$ for $s_p \geq 1$, $t_p \geq 0$, $1 \leq p \leq
u$ and $0 \leq u \leq m$; $A_p = \Lambda(r_p,s_p,t_p)$ for $s_p >
r_p \geq 1$, $t_p \geq 0$, $u+1 \leq p \leq v$ and $u \leq v \leq
m$; $A_p$ is a basic connected piecewise hereditary algebra of
Dynkin type for $v+1 \leq p \leq m$. Obviously, the algebra
$\Lambda(r,s,t)$ is of finite global dimension if and only if $s >
r$. The algebra $\Lambda(s,s,0)$ with $s \geq 1$ is called a {\it
2-truncated cycle algebra}.

Now we observe the noncompact indecomposable objects in
$\mathcal{D}^b(\Lambda (s,s,t))$. For $0 \leq p \leq s-1 $, let
$X_p$ be the simple $\Lambda(s,s,t)$-module $S_p$ corresponding to
the vertex $p$ of the quiver $Q(s,t)$. For $-t \leq q \leq -1$, let
$Y_q$ be the indecomposable $\Lambda(s,s,t)$-module $P_q/S_1$ if $s
\geq 2$ and $P_q/S_0$ if $s=1$, where $P_q$ is the indecomposable
projective $\Lambda(s,s,t)$-module corresponding to the vertex $q$
of the quiver $Q(s,t)$. Note that $Y_q$ has a minimal projective
resolution
$$\xymatrix{\cdots \cdots \ar[r]^-{\alpha_0} & P_0
\ar[r]^-{\alpha_{s-1}} & P_{s-1} \ar[r]^-{\alpha_{s-2}} & \cdots
\ar[r]^-{\alpha_2} & P_2 \ar[r]^-{\alpha_1} & P_1 \ar[rr]^-{\alpha_q
\cdots \alpha_{-1}\alpha _0} && P_q \ar[r] & 0}$$ where all
differentials are given by the left multiplications of the paths.

\begin{lemma} \label{Lemma-Hom-RHom}
Let $A = \Lambda (s,s,t)$ with $s \geq 1 $ and $t \geq 0$. Then, up
to isomorphism and shift, $X_0, X_1, \cdots , X_{s-1}$ and $Y_{-t},
Y_{-t+1}, \cdots , Y_{-1}$ are all noncompact indecomposable objects
in $\mathcal{D}^b(A)$. Moreover, for all noncompact indecomposable
objects $X$ and $Y$ in $\mathcal{D}^b(A)$, the following statements
hold:

{\rm (1)} $\Hom_{\mathcal{D}A}(X,X[h]) \cong k$ if
$h=0,s,2s,\cdots$, and $=0$ otherwise.

{\rm (2)} $\RHom_A(X,Y) \ncong 0.$
\end{lemma}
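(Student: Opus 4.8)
The plan is to separate the classification in Part 1 from the homological computations in Part 2, and to reduce everything in Part 2 to stable-module computations governed by the $s$-periodicity coming from the $2$-truncated cycle. For Part 1, I would invoke the string-and-band classification of indecomposables in $\mathcal{D}^b$ of a gentle algebra (Bekkert--Merklen, together with Vossieck's description and the Bobi\'nski--Gei{\ss}--Skowro\'nski normal form). An object of $\mathcal{D}^b(A)$ is noncompact precisely when it does not lie in $K^b(\proj A)$, i.e.\ when its minimal projective resolution is infinite; reading this off the combinatorics, the only indecomposable strings with infinite resolution are those wrapping around the unique cycle, and up to isomorphism and shift they are exactly the modules $S_0,\dots,S_{s-1}$ and $Y_{-t},\dots,Y_{-1}$. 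In particular each noncompact indecomposable is a shift of one of these modules, which is what makes the module-theoretic computation below legitimate.

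The key preliminary step for Part 2 is to pin down the uniserial structure and the syzygies. From the relations $\alpha_{i+1}\alpha_i=0$ on the cycle, each cycle projective is $P_i=S_i/S_{i+1}$ of length two (indices mod $s$), so $\Omega S_i\cong S_{i+1}$ and hence $\Omega^s S_i\cong S_i$: the simples on the cycle are $\Omega$-periodic of period $s$. For the tail modules, the defining short exact sequence $0\to S_1\to P_q\to Y_q\to0$ (respectively $0\to S_0\to P_q\to Y_q\to 0$ when $s=1$) shows $\Omega Y_q\cong S_1$, so $Y_q$ feeds into the same periodic tail and $\Omega^h Y_q\cong S_{h}$ (indices mod $s$) for $h\geq 1$. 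Thus every noncompact indecomposable is eventually $\Omega$-periodic of period $s$, and $\{\Omega^h X : h\geq 0\}$ meets every cycle simple $S_0,\dots,S_{s-1}$.

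For Part 2(1) I would use $\Hom_{\mathcal{D}A}(X,X[h])\cong\Ext^h_A(X,X)\cong\underline{\Hom}(\Omega^hX,X)$ for $h\geq 1$, together with $\Hom_{\mathcal{D}A}(X,X)\cong\End_A(X)\cong k$ (each $X$ is an indecomposable string module, hence has trivial endomorphism ring). Substituting the syzygies from the previous paragraph reduces the claim to two stable-Hom facts: $\underline{\Hom}(S_i,S_j)$ is $k$ if $i\equiv j\pmod s$ and $0$ otherwise, and $\underline{\Hom}(S_j,Y_q)$ is $k$ for $j\equiv 0$ and $0$ for $j\not\equiv0\pmod s$. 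The nonvanishing cases are the socle inclusions $S_i\hookrightarrow S_i$ and $S_0=\soc Y_q\hookrightarrow Y_q$; the point requiring care is that these represent nonzero classes in the stable category, which one checks by observing that the only projective with socle $S_0$ is $P_{s-1}$ while $\Hom_A(P_{s-1},Y_q)=e_{s-1}Y_q=0$, so no such map factors through a projective. This yields $\Ext^h_A(X,X)\cong k$ exactly for $h\in\{0,s,2s,\dots\}$.

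For Part 2(2) the same ingredients give a uniform nonvanishing argument. For any noncompact indecomposable $Y$ there is a cycle simple lying in its socle: $S_j=\soc S_j$ when $Y=S_j$, and $S_0\subseteq\soc Y_q$ when $Y=Y_q$; in either case the socle inclusion is a nonzero stable map, so $\underline{\Hom}(S_m,Y)\neq0$ for that $m$. Since $\{\Omega^hX\}_{h\geq0}$ runs through all cycle simples, I choose $h$ with $\Omega^hX\cong S_m$ and conclude $\Ext^h_A(X,Y)\cong\underline{\Hom}(S_m,Y)\neq0$, whence $\RHom_A(X,Y)\ncong0$. The main obstacle is not this final deduction but the two inputs feeding it: getting the complete noncompact list in Part 1 correct from the string combinatorics (in particular that no band complexes other than shifts of these modules are noncompact), and the bookkeeping of stable Homs in Part 2(1)---especially the ``factors through a projective'' subtlety and the degenerate cases $s=1$ and $t=0$, which have to be verified separately.
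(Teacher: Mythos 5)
Your argument is correct in substance but takes a genuinely different route from the paper's in part 2. The paper never leaves the derived category: it applies $\RHom_A(-,-)$ to the triangle $P_q \rightarrow Y_q \rightarrow S_1[1] \rightarrow$ coming from $S_1 \rightarrowtail P_q \twoheadrightarrow Y_q$, kills the projective term, and reduces case (1.2) and each of the four cases of (2) separately to $\Ext$-groups between simples. You instead exploit the syzygy periodicity $\Omega S_i \cong S_{i+1}$, $\Omega Y_q \cong S_1$ and compute in the stable module category; this collapses the paper's four-case analysis of (2) into one uniform argument (hit the socle of $Y$ with a suitable syzygy of $X$) and makes the period-$s$ pattern in (1) transparent. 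Part 1 is handled the same way in both proofs, by appeal to Bekkert--Merklen.

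One step needs more justification than you give it: the identification $\Ext^h_A(M,N) \cong \underline{\Hom}_A(\Omega^h M, N)$ is not a theorem for arbitrary finite dimensional algebras. In general one only has a natural surjection $\Ext^h_A(M,N) \twoheadrightarrow \underline{\Hom}_A(\Omega^h M,N)$, because the subgroup of $\Hom_A(\Omega^h M,N)$ killed in $\Ext^h$ consists of the maps extending to the projective cover of $\Omega^{h-1}M$, which can be strictly smaller than the set of maps factoring through some projective (already for $k(1\to 2)$ one has $\Ext^1(S_1,S_2)=k$ but $\underline{\Hom}(\Omega S_1,S_2)=\underline{\Hom}(P_2,S_2)=0$). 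Since $\Lambda(s,s,t)$ is self-injective only when $t=0$, you cannot quote this as a standard fact for $t\geq 1$. Fortunately your computations already contain the repair: the surjection alone gives every nonvanishing claim in (1) and (2); in every vanishing case of (1) the full space $\Hom_A(\Omega^h X, X)$ is itself zero (a Hom between non-isomorphic simples, or from $S_j$ with $j\not\equiv 0$ into the uniserial $Y_q$ whose only cycle-simple composition factor is its socle $S_0$), so $\Ext^h$ vanishes too; and in the cases yielding $k$, the fact that the socle inclusion factors through no projective shows a fortiori that it does not extend to the relevant projective cover, so $\Ext^h \cong \Hom \cong k$. Add a sentence to this effect, and carry out the $s=1$ adjustment you already flag (there $\Hom_A(P_0,Y_q)\neq 0$, but the nonzero map $P_0\to Y_q$ still annihilates $\soc P_0$, so the conclusion stands), and the proof is complete.
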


\begin{proof}
For the gentle algebra $A = \Lambda (s,s,t)$, it is not difficult to
see $GSt_c = \{\alpha_p, (\alpha_{q} \cdots
\alpha_{-1} \alpha_0) \ | \ 0 \leq p \leq s-1, \ -t \leq q \leq -1\}$ and
$GSt^c = \{\alpha^{-1}_p, \linebreak (\alpha^{-1}_0\alpha^{-1}_{-1} \cdots
\alpha^{-1}_q) \ | \ 0 \leq p \leq s-1, \ -t \leq q \leq
-1\}$ (Ref. \cite[4.3]{BM03}). Since generalized strings are defined
up to inverse, i.e., the equivalence relation $\cong_s$ (Ref.
\cite[4.1]{BM03}), we have $GSt_c = GSt^c = \{\alpha_p, (\alpha_q
\cdots \alpha_{-1}\alpha_0) \ | \ 0 \leq p \leq s-1, \ -t \leq q
\leq -1\}$. Due to \cite[Definition 2, Lemma 6, and Theorem
3]{BM03}, up to isomorphism and shift, $X_0, X_1, \cdots , X_{s-1}$
and $Y_{-t}, Y_{-t+1}, \cdots , Y_{-1}$ are all noncompact
indecomposable objects in $\mathcal{D}^b(A)$.

Let $X$ and $Y$ be two noncompact indecomposable objects in
$\mathcal{D}^b(A)$.

(1) There are altogether two cases:

(1.1) $X = X_p, \ 0 \leq p \leq s-1$. Since $X$ is the simple
$A$-module $S_p$ corresponding to the vertex $p$ of the quiver
$Q(s,t)$, it is easy to calculate $\Hom_{\mathcal{D}A}(X,X[h]) \cong
k$ if $h=0,s,2s,\cdots$, and $=0$ otherwise.

(1.2) $X = Y_q, \ -t \leq q \leq -1$. In the following, we only
consider the case $s \geq 2$. For the case $s=1$, it is enough to
replace $S_1$ with $S_0$ in appropriate places. We have a short
exact sequence $S_1 \rightarrowtail P_q \twoheadrightarrow Y_q$,
which induces a triangle $P_q \rightarrow Y_q \rightarrow S_1[1]
\rightarrow$ in $\mathcal{D}A$. Applying the derived functor
$\RHom_A(Y_q,-)$ to this triangle, we obtain a triangle
$\RHom_A(Y_q,P_q) \rightarrow \RHom_A(Y_q,Y_q) \rightarrow
\RHom_A(Y_q,S_1[1]) \rightarrow$ in $\mathcal{D}k$. It is not
difficult to prove $\RHom_A(Y_q,P_q) \cong 0$. Thus we have
$\RHom_A(Y_q,Y_q) \cong \RHom_A(Y_q,S_1[1])$. Now (1.2) follows from
the obvious fact $H^h(\RHom_A(Y_q,S_1[1])) \cong k$ if
$h=0,s,2s,\cdots$, and $=0$ otherwise.

(2) There are altogether four cases:

(2.1) $(X=X_p)\wedge(Y=X_{p'}), \ 0 \leq p,p' \leq s-1$. Obviously,
there exist infinitely many $h \in \mathbb{N}$ such that $\Ext_{A}^h
(S_p,S_{p'}) \neq 0$. Thus $\RHom_A(X_p,X_{p'}) \ncong 0$.

(2.2) $(X=X_p)\wedge(Y=Y_q), \ 0 \leq p \leq s-1, \ -t \leq q \leq
-1$. Applying the derived functor $\RHom_A(X_p,-)$ to the triangle
$P_q \rightarrow Y_q \rightarrow S_1[1] \rightarrow$ in
$\mathcal{D}A$, we obtain a triangle $\RHom_A(X_p,P_q) \rightarrow
\RHom_A(X_p,Y_q) \rightarrow \RHom_A(X_p,S_1[1]) \rightarrow$ in
$\mathcal{D}k$ which gives a long exact sequence. We can show
$\Hom_{\mathcal{D}A}(X_p,P_q[h])=0$ for all $h \neq 0$. Thus
$\Hom_{\mathcal{D}A}(X_p,Y_q[h]) \cong
\Hom_{\mathcal{D}A}(X_p,S_1[h+1])$ for all $h \geq 1$. Since
$\Ext_A^h(S_p,S_1) \neq 0$ for infinitely many $h>0$, there exist
infinitely many $h \in \mathbb{N}$ such that
$\Hom_{\mathcal{D}A}(X_p,Y_q[h]) \neq 0$. Hence $\RHom_A(X_p,Y_q)
\ncong 0$.

(2.3) $(X=Y_q)\wedge(Y=X_p), \ -t \leq q \leq -1, \ 0 \leq p \leq
s-1$. Applying the derived functor $\RHom_A(-,S_p)$ to the triangle
$P_q \rightarrow Y_q \rightarrow S_1[1] \rightarrow$ in
$\mathcal{D}A$, we obtain a triangle $\RHom_A(S_1[1],S_p)
\rightarrow \RHom_A(Y_q,S_p) \rightarrow \RHom_A(P_q,S_p)
\rightarrow$ in $\mathcal{D}k$. Obviously, $\RHom_A(P_q,S_p) \cong
0$. Therefore, we have $\RHom_A(Y_q,S_p) \cong \RHom_A(S_1[1],S_p)
\ncong 0$ by (2.1), i.e., $\RHom_A(Y_q,X_p) \ncong 0.$

(2.4) $(X=Y_q)\wedge(Y=Y_{q'}), \ -t \leq q,q' \leq -1$. Applying
the derived functor $\RHom_A(Y_q,-)$ to the triangle $P_{q'}
\rightarrow Y_{q'} \rightarrow S_1[1] \rightarrow$ in
$\mathcal{D}A$, we obtain a triangle $\RHom_A(Y_q,P_{q'})
\rightarrow \RHom_A(Y_q,Y_{q'}) \rightarrow \RHom_A(Y_q,S_1[1])
\rightarrow$ in $\mathcal{D}k$. It is not difficult to show
$\Hom_{\mathcal{D}A}(Y_q,P_{q'}[h])=0$ for all $h \neq 1$. Thus
$\Hom_{\mathcal{D}A}(Y_q,Y_{q'}[h]) \cong
\Hom_{\mathcal{D}A}(Y_q,S_1[h+1])$ for all $h \geq 2$. Note that
$\Hom_{\mathcal{D}A}(Y_q,S_1[h+1]) \neq 0$ for infinitely many $h
\in \mathbb{N}$. Hence, there exist infinitely many $h \in
\mathbb{N}$ such that $\Hom_{\mathcal{D}A}(Y_q,Y_{q'}[h]) \neq 0$.
Therefore, $\RHom_A(Y_q,Y_{q'}) \ncong 0.$
\end{proof}

Before giving some results on the $n$-recollements of derived
categories of derived discrete algebras, we recall the definition of
$n$-recollement of triangulated categories.

Let $\mathcal{T}_1$, $\mathcal{T}$ and $\mathcal{T}_2$ be
triangulated categories. A {\it recollement} of $\mathcal{T}$
relative to $\mathcal{T}_1$ and $\mathcal{T}_2$ is given by
$$\xymatrix@!=4pc{ \mathcal{T}_1 \ar[r]^{i_*=i_!} & \mathcal{T} \ar@<-3ex>[l]_{i^*}
\ar@<+3ex>[l]_{i^!} \ar[r]^{j^!=j^*} & \mathcal{T}_2
\ar@<-3ex>[l]_{j_!} \ar@<+3ex>[l]_{j_*}}$$ such that

(R1) $(i^*,i_*), (i_!,i^!), (j_!,j^!)$ and $(j^*,j_*)$ are adjoint
pairs of triangle functors;

(R2) $i_*$, $j_!$ and $j_*$ are full embeddings;

(R3) $j^!i_*=0$ (and thus also $i^!j_*=0$ and $i^*j_!=0$);

(R4) for each $X \in \mathcal {T}$, there are triangles

$$\begin{array}{l} j_!j^!X \rightarrow X  \rightarrow i_*i^*X  \rightarrow
\\ i_!i^!X \rightarrow X  \rightarrow j_*j^*X  \rightarrow
\end{array}$$ where the arrows to and from $X$ are the counits and the
units of the adjoint pairs respectively \cite{BBD82}.

Let $\mathcal{T}_1$, $\mathcal{T}$ and $\mathcal{T}_2$ be
triangulated categories, and $n$ a positive integer. An {\it
$n$-recollement} of $\mathcal{T}$ relative to $\mathcal{T}_1$ and
$\mathcal{T}_2$ is given by $n+2$ layers of triangle functors
$$\xymatrix@!=4pc{ \mathcal{T}_1 \ar@<+1ex>[r] \ar@<-3ex>[r]_\vdots & \mathcal{T}
\ar@<+1ex>[r]\ar@<-3ex>[r]_\vdots \ar@<-3ex>[l] \ar@<+1ex>[l] &
\mathcal{T}_2 \ar@<-3ex>[l] \ar@<+1ex>[l]}$$ such that every
consecutive three layers form a recollement \cite{HQ14}.

From now on, we only focus on the $n$-recollements of derived
categories of algebras, i.e., all three triangulated categories in
an $n$-recollement are the derived categories of algebras.

\begin{proposition} \label{Proposition-n-recollement}
Let $A$ be a derived discrete algebra. Then every recollement of
$\mathcal{D}A$ relative to $\mathcal{D}B$ and $\mathcal{D}C$ can be
extended to an $n$-recollement for all $n \geq 1$.
\end{proposition}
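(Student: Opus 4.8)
The plan is to combine a general extension criterion for recollements of derived module categories with the structural normal form recorded before Lemma~\ref{Lemma-Hom-RHom} and with Lemma~\ref{Lemma-Hom-RHom} itself. First I would use that both recollements and $n$-recollements are invariant under derived equivalence to replace $A$ by its normal form: up to derived equivalence $A \cong \bigoplus_{p=1}^m A_p$, where each $A_p$ is either a piecewise hereditary algebra of Dynkin type, or $\Lambda(r_p,s_p,t_p)$ with $s_p > r_p$, or $\Lambda(s_p,s_p,t_p)$. The first two families have finite global dimension, hence are homologically smooth, while the third is precisely the infinite-global-dimension case. The difficulty will therefore be concentrated entirely in the blocks $\Lambda(s,s,t)$.

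For the homologically smooth part I would invoke the general principle from \cite{HQ14} that a recollement of $\mathcal{D}A$ extends to an $n$-recollement for all $n \geq 1$ as soon as each of the six functors preserves compact objects, i.e.\ keeps the iterated adjoints inside $K^b(\proj)$, and that finite global dimension of the algebras involved guarantees exactly this preservation. Thus for the $\Lambda(r,s,t)$ with $s>r$ and the piecewise hereditary Dynkin blocks the extension is automatic, and it only remains to control the recollements that involve the blocks $\Lambda(s,s,t)$.

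The heart of the argument would exploit Lemma~\ref{Lemma-Hom-RHom}. In any recollement the essential images of the two outer embeddings are Hom-orthogonal by axiom (R3); but Lemma~\ref{Lemma-Hom-RHom}(2) asserts $\RHom_A(X,Y) \ncong 0$ for \emph{every} pair of noncompact indecomposables $X,Y$ in $\mathcal{D}^b(\Lambda(s,s,t))$. Hence no recollement of $\mathcal{D}(\Lambda(s,s,t))$ can place two noncompact indecomposables on opposite strata: all noncompact objects are forced into a single stratum. This collapses any nontrivial recollement onto its behaviour on the compact objects $K^b(\proj)$, where the relevant functors do preserve compactness and the extension argument of the smooth case applies; the only alternative is the trivial recollement, which extends automatically.

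The main obstacle I expect is making this third step precise: showing rigorously that the non-vanishing in Lemma~\ref{Lemma-Hom-RHom}(2) forces the noncompact part of $\mathcal{D}(\Lambda(s,s,t))$ into one stratum, and that the surviving recollement is then genuinely compact-compatible rather than merely orthogonality-compatible. Alongside this sits the bookkeeping needed to pass from the block decomposition $A \cong \bigoplus_p A_p$ to a decomposition of an \emph{arbitrary} recollement of $\mathcal{D}A$, since a recollement need not a priori respect the splitting into connected components; once the recollement is shown to be analyzable block by block, the extension is assembled from the smooth blocks and the $\Lambda(s,s,t)$ blocks separately.
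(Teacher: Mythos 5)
Your overall frame (the finite global dimension case is automatic; the difficulty sits in the blocks $\Lambda(s,s,t)$) matches the paper, but the step you yourself flag as the main obstacle is a genuine gap, and the tool you propose for it is the wrong part of Lemma~\ref{Lemma-Hom-RHom}. The criterion that actually extends a recollement of $\mathcal{D}A$ relative to $\mathcal{D}B$ and $\mathcal{D}C$ to a $2$-recollement (and then, by induction, to an $n$-recollement for all $n$) is that $i_*B$ be a compact object of $\mathcal{D}A$ (\cite[Lemma 3]{HQ14}). Your argument via part (2) of Lemma~\ref{Lemma-Hom-RHom} and the orthogonality axiom (R3) only shows that noncompact indecomposables cannot be distributed over two mutually orthogonal strata; it does not rule out the scenario that is actually dangerous, namely that all noncompact indecomposables --- including a noncompact direct summand of $i_*B$ itself --- sit inside the single stratum $\mathrm{Im}\, i_*$. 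In that scenario your orthogonality argument produces no contradiction, yet $i_*B$ fails to be compact and the extension criterion is not met. So ``all noncompact objects in one stratum'' does not imply ``the recollement is compact-compatible,'' and the reduction to $K^b(\proj)$ you sketch never gets off the ground.

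The paper closes exactly this gap using part (1) of Lemma~\ref{Lemma-Hom-RHom} instead: one has $i_*B \in \mathcal{D}^b(A)$ by \cite[Lemma 2]{HQ14}, and if $i_*B$ were not compact, some indecomposable projective $B$-module $P$ would have $i_*P$ a noncompact indecomposable object of $\mathcal{D}^b(\Lambda(s,s,t))$ for one of the infinite-global-dimension blocks; every such object satisfies $\Hom_{\mathcal{D}A}(X,X[h]) \cong k$ for $h = 0, s, 2s, \cdots$, whereas $i_*P$ is exceptional because $P$ is projective and $i_*$ is fully faithful. This contradiction forces $i_*B$ to be compact, and \cite[Lemma 3]{HQ14} plus induction finishes the proof. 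Note also that your worry about decomposing an arbitrary recollement along the block decomposition of $A$ evaporates in this approach: one only needs that a noncompact indecomposable object of $\mathcal{D}^b(A)$ automatically lies in $\mathcal{D}^b(\Lambda(s,s,t))$ for some block of infinite global dimension, which is immediate and requires no compatibility of the recollement with the splitting $A \cong \oplus_p A_p$.
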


\begin{proof}
If gl.dim$A$ $<\infty$ then this is clear by \cite[Proposition
3.3]{AKLY13}. If gl.dim$A$ $=\infty$, all the connected directed
summands of $A$ that are of infinite global dimension are derived
equivalent to $\Lambda(s,s,t)$ for some $s \geq 1$ and $t \geq 0$.
Assume that there is a recollement
$$\xymatrix@!=4pc{ \mathcal{D}B \ar[r]^{i_*}
& \mathcal{D}A \ar@<-3ex>[l]_{i^*} \ar@<+3ex>[l]_{i^!} \ar[r]^{j^!}
& \mathcal{D}C \ar@<-3ex>[l]_{j_!} \ar@<+3ex>[l]_{j_*}}.$$ Then
$i_*B \in \mathcal{D}^b(A)$ by \cite[Lemma 2]{HQ14}. If $i_*B$ is
not compact then there exists an indecomposable projective
$B$-module $P$ such that $i_*P$ is not compact. Therefore, $i_*P$ is
a noncompact indecomposable object in
$\mathcal{D}^b(\Lambda(s,s,t))$ for some $s \geq 1$ and $t \geq 0$.
By Lemma~\ref{Lemma-Hom-RHom}, $i_*P$ is isomorphic to a shift of
$X_p$ or $Y_q$, and $\Hom_{\mathcal{D}A}(i_*P,i_*P[h]) \neq 0$ for
infinitely many $h$. It is a contradiction, since $i_*B$, and thus
$i_*P$, is exceptional. Hence, $i_*B$ is compact and the recollement
can be extended to a $2$-recollement by \cite[Lemma 3]{HQ14}.
Furthermore, the recollement can be extended to an $n$-recollement
for all $n \geq 1$ by induction.
\end{proof}

The following result is a generalization of
\cite[Proposition]{Voss01}, which states that derived discrete
algebras are invariant under derived equivalence.

\begin{proposition} \label{Proposition-DerivedDiscrete}
Let $A$, $B$ and $C$ be algebras, and $\mathcal{D}A$ admit a
recollement relative to $\mathcal{D}B$ and $\mathcal{D}C$. If $A$ is
derived discrete then so are $B$ and $C$.
\end{proposition}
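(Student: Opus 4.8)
The plan is to characterize derived discreteness by the finiteness of a single cohomological size function and then transport that finiteness across the recollement using the fully faithful functors $i_*,j_!$ together with the standard identities $i^*i_*\cong\mathrm{id}$ and $j^*j_!\cong\mathrm{id}$. For $X\in\mathcal{D}^b(\Gamma)$ write $\ell(X)=\sum_{p\in\mathbb{Z}}\dim_k H^p(X)$ for its total cohomological length. By the characterizations of derived discreteness in terms of (global) cohomological length in \cite{HZ13}, an algebra $\Gamma$ is derived discrete if and only if $L_\Gamma:=\sup\{\ell(X):X\in\mathrm{ind}\,\mathcal{D}^b(\Gamma)\}<\infty$. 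Since $A$ is derived discrete we have $L_A<\infty$, and it suffices to bound $\ell$ on the indecomposables of $\mathcal{D}^b(B)$ and $\mathcal{D}^b(C)$ in terms of $L_A$.

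First I would arrange that the recollement restricts to the bounded derived categories. Because $A$ is derived discrete, Proposition~\ref{Proposition-n-recollement} extends the given recollement to an $n$-recollement for every $n\geq1$; by the restriction theory for such recollements in \cite{HQ14}, all six functors preserve the bounded subcategories. Thus $i_*$ and $j_!$ restrict to full embeddings of $\mathcal{D}^b(B)$ and $\mathcal{D}^b(C)$ into $\mathcal{D}^b(A)$, and $i^*$ and $j^!=j^*$ restrict to triangle functors from $\mathcal{D}^b(A)$ to $\mathcal{D}^b(B)$ and $\mathcal{D}^b(C)$ respectively.

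The key estimate is a uniform linear bound on the cohomological-length distortion of a triangle functor $F\in\{i^*,j^*\}$ out of $\mathcal{D}^b(A)$: there is a constant $C_F$ with $\ell(FN)\leq C_F\cdot\ell(N)$ for all $N\in\mathcal{D}^b(A)$. To prove this I would first note that $\ell$ is subadditive on triangles, as the long exact cohomology sequence gives $\dim_k H^p(V)\leq\dim_k H^p(U)+\dim_k H^p(W)$ for a triangle $U\to V\to W\to U[1]$. Applying $F$ to the truncation filtration of $N$, whose subquotients are the $H^p(N)[-p]$, then yields $\ell(FN)\leq\sum_p\ell(FH^p(N))$. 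Applying $F$ to a composition series of a module $M$, whose number of factors is at most $\dim_k M$ and whose simple factors lie among the finitely many simple $A$-modules $S_1,\dots,S_m$, gives $\ell(FM)\leq(\max_j\ell(FS_j))\cdot\dim_k M$; here each $FS_j$ lies in $\mathcal{D}^b$ by the restriction step, so $\ell(FS_j)<\infty$. Setting $C_F=\max_j\ell(FS_j)$ and combining the two inequalities produces $\ell(FN)\leq C_F\,\ell(N)$.

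Granting this, the proposition follows at once. For an indecomposable $Y\in\mathcal{D}^b(B)$, the object $i_*Y$ is indecomposable in the Krull--Schmidt category $\mathcal{D}^b(A)$ since $i_*$ is fully faithful, so $\ell(i_*Y)\leq L_A$; together with $Y\cong i^*i_*Y$ this gives $\ell(Y)\leq C_{i^*}L_A$, whence $L_B<\infty$ and $B$ is derived discrete. The same argument applied to $j_!$ and the identity $Y'\cong j^*j_!Y'$ bounds $\ell$ on the indecomposables of $\mathcal{D}^b(C)$, so $C$ is derived discrete as well. The main obstacle is the very first step, namely guaranteeing that $i^*$ and $j^*$ carry modules to bounded complexes of finite cohomological amplitude; without this the constants $\ell(FS_j)$ could be infinite and the distortion estimate would be vacuous. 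This is exactly the point at which the hypothesis that $A$ is derived discrete enters, through Proposition~\ref{Proposition-n-recollement} and the restriction results of \cite{HQ14}, after which the subadditivity and composition-series estimates are routine.
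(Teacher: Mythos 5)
Your argument is correct, and it shares the paper's opening move---extending the recollement via Proposition~\ref{Proposition-n-recollement} and \cite[Lemma 2]{HQ14} so that all six functors restrict to $\mathcal{D}^b(\mod)$, then exploiting full faithfulness of the embeddings and a d\'evissage (truncations plus composition series) down to simple modules---but the finishing mechanism is genuinely different. The paper applies the d\'evissage estimate of \cite[Section 1.1]{Voss01} to the embedding $i_*$ itself: indecomposables of $\mathcal{D}^b(B)$ with a fixed cohomology dimension vector land on indecomposables of $\mathcal{D}^b(A)$ whose dimension vectors lie in a finite set, and one then counts directly from the definition of derived discreteness using the injectivity of $X\mapsto i_*X$ on isomorphism classes. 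You instead apply the estimate to the adjoints $i^*$ and $j^*$ going \emph{out} of $\mathcal{D}^b(A)$, and replace the definition by the finiteness of the single invariant $L_\Gamma=\sup_{\mathrm{ind}}\ell$, transported through $\ell(Y)=\ell(i^*i_*Y)\le C_{i^*}\,\ell(i_*Y)\le C_{i^*}L_A$. What your route buys is a clean numerical bound with no per-dimension-vector bookkeeping; what it costs is the import of the first Brauer--Thrall type theorem of \cite{HZ13}, since the equivalence ``derived discrete $\Leftrightarrow$ global cohomological range finite'' is a substantial result (it essentially already encodes the classification of derived discrete algebras), whereas the paper's count needs nothing beyond the definition and Vossieck's elementary estimate. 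If you keep your version, two points should be made explicit: (i) the invariant actually treated in \cite{HZ13} is the cohomological range $\mathrm{hr}=\mathrm{hl}\cdot\mathrm{hw}$ rather than your total length $\ell$, so you should record the comparison $\mathrm{hl}(X)\le\ell(X)\le\mathrm{hr}(X)\le\ell(X)^2$ (valid when $\mathrm{hw}$ is the cardinality of the cohomological support) to see that the two finiteness conditions coincide; (ii) you should justify that $j_!$, and not only $j_*$, restricts to $\mathcal{D}^b(\mod)$, which again follows from the $n$-recollement extension but deserves a word since only $i_*$ and $j_*$ are used for this purpose in the paper.
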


\begin{proof}
According to Proposition~\ref{Proposition-n-recollement} and
\cite[Lemma 2]{HQ14}, we have a recollement
$$\xymatrix@!=4pc{ \mathcal{D}B \ar[r]^{i_*}
& \mathcal{D}A \ar@<-3ex>[l]_{i^*} \ar@<+3ex>[l]_{i^!} \ar[r]^{j^!}
& \mathcal{D}C \ar@<-3ex>[l]_{j_!} \ar@<+3ex>[l]_{j_*}},$$ where all
six triangle functors can be restricted to $\mathcal{D}^b(\mod)$.
Since $i_*$ is fully faithful, it induces a map from the set of
isomorphism classes of indecomposable objects in $\mathcal{D}^b(B)$
to the set of isomorphism classes of indecomposable objects in
$\mathcal{D}^b(A)$. Due to $i^*i_* \cong 1$, this map is injective.
Moreover, by the same estimate on the cohomological dimension vector
as \cite[Section 1.1]{Voss01}, we can show that the cohomological
dimension vectors of the images of indecomposable objects in
$\mathcal{D}^b(B)$ of a fixed cohomological dimension vector under
the triangle functor $i_*$ are ``bounded''. Thus $B$ is derived
discrete. Similarly, we can prove $C$ is derived discrete by
considering the triangle functor $j_*$ instead of $i_*$.
\end{proof}

\begin{remark}{\rm The converse of Proposition~\ref{Proposition-DerivedDiscrete}
is not true in general. Kronecker algebra $A$, which is nothing but
the path algebra $kQ$ of the quiver $Q$ given by two vertices 1 and
2 and two arrows from 1 to 2, provides a counterexample. Indeed,
though $\mathcal{D}A$ admits a recollement relative to
$\mathcal{D}k$ and $\mathcal{D}k$, $A$ is not derived discrete since
all derived discrete algebras are of finite representation type.
}\end{remark}

\section{\large $n$-derived-simple derived discrete algebras}

\indent\indent In this section, we will classify all
$n$-derived-simple derived discrete algebras up to derived
equivalence.

Let $n$ be a positive integer. An algebra $A$ is said to be {\it
$n$-derived-simple} if its derived category $\mathcal{D}A$ admits no
nontrivial $n$-recollements of derived categories of algebras
\cite{HQ14}. Note that 1- (resp. 2-, 3-) derived-simple algebras
here are just the $\mathcal{D}(\Mod)$- (resp. $\mathcal{D}^-(\Mod)$-
or $\mathcal{D}^+(\Mod)$-, $\mathcal{D}^b(\mod)$- or
$\mathcal{D}^b(\Mod)$-) derived-simple algebras in \cite{AKLY13}.

Now we focus on the algebra $\Lambda(s,s,t)$, where $s \geq 1$ and $t \geq 0$.
The following lemma shows that $\Lambda(s,s,t)$ is rigid
when deconstructing its derived category by recollements.

\begin{lemma} \label{Lemma-Recollement-B-C}
Let $A = \Lambda(s,s,t)$ with $s \geq 1$ and $t \geq 0$, and
$\mathcal{D}A$ admit a recollement relative to $\mathcal{D}B$ and
$\mathcal{D}C$. Then either $\gl C < \infty$ and $B$ is derived
equivalent to $\Lambda(s,s,t') \oplus B'$ with $0 \leq t' \leq t$
and $\gl B' < \infty$, or $\gl B < \infty$ and $C$ is derived
equivalent to $\Lambda(s,s,t') \oplus C'$ with $0 \leq t' \leq t$
and $\gl C' < \infty$.
\end{lemma}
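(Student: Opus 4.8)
The plan is to first upgrade the given recollement to a well-behaved one and then read off the infinite global dimension part from the rigidity recorded in Lemma~\ref{Lemma-Hom-RHom}. By Proposition~\ref{Proposition-n-recollement} the recollement extends to an $n$-recollement for every $n$, so (as in the proof of Proposition~\ref{Proposition-DerivedDiscrete}) all six functors restrict to $\mathcal{D}^b(\mod)$ and, since $i_*B$ is compact, to $K^b(\proj)$; in particular each of $i_*,i^*,j_!,j^!$ preserves compactness, while $i_*,j_!$ are fully faithful, hence preserve indecomposability and, via $i^*i_*\cong 1$ and $j^!j_!\cong 1$, reflect compactness. Since $A=\Lambda(s,s,t)$ is derived discrete with $\gl A=\infty$, both $B$ and $C$ are derived discrete by Proposition~\ref{Proposition-DerivedDiscrete}, so each is, up to derived equivalence, a direct sum of copies of some $\Lambda(s_i,s_i,t_i)$ and an algebra of finite global dimension. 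The whole statement amounts to showing that the total infinite-global-dimension part of $B$ and $C$ is a single block $\Lambda(s,s,t')$ with $0\le t'\le t$ sitting entirely in one of $B$, $C$.

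The first key step is that exactly one of $B,C$ has infinite global dimension. For ``at most one'', suppose both did; then each of $\mathcal{D}^b(B)$, $\mathcal{D}^b(C)$ contains a noncompact indecomposable (each has a block $\Lambda(s',s',t'')$, which has one by Lemma~\ref{Lemma-Hom-RHom}), say $N_B$ and $N_C$. Because $i_*$ and $j_!$ preserve indecomposability and reflect compactness, $i_*N_B$ and $j_!N_C$ are noncompact indecomposables of $\mathcal{D}^b(A)$. Using $j^!i_*=0$ I get $H^h\RHom_A(j_!N_C,i_*N_B)\cong\Hom_{\mathcal{D}A}(j_!N_C,i_*N_B[h])\cong\Hom_{\mathcal{D}C}(N_C,j^!i_*N_B[h])=0$ for all $h$, so $\RHom_A(j_!N_C,i_*N_B)\cong 0$, contradicting Lemma~\ref{Lemma-Hom-RHom}(2). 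For ``at least one'', suppose both had finite global dimension; then $\mathcal{D}^b(B)=K^b(\proj B)$ and $\mathcal{D}^b(C)=K^b(\proj C)$ consist of compact objects, so in the triangle $j_!j^!X_0\to X_0\to i_*i^*X_0\to$ both outer terms are compact, forcing $X_0$ compact, a contradiction. Hence, after interchanging $B$ and $C$ if necessary, I may assume $\gl C<\infty$ and $\gl B=\infty$.

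It remains to identify the infinite part of $B$, which is derived equivalent to $\bigoplus_i\Lambda(s_i,s_i,t_i)\oplus B'$ with $\gl B'<\infty$ and at least one factor present. Each connected factor $\Lambda(s_i,s_i,t_i)$ carries a noncompact indecomposable $N_i$ with $\Hom_{\mathcal{D}B}(N_i,N_i[h])\cong k$ exactly when $s_i\mid h$, by Lemma~\ref{Lemma-Hom-RHom}(1) applied to that factor. Full faithfulness of $i_*$ identifies this with $\Hom_{\mathcal{D}A}(i_*N_i,i_*N_i[h])$, whose nonzero degrees are the multiples of $s$ by Lemma~\ref{Lemma-Hom-RHom}(1) for $A$; comparing the smallest positive such degree forces $s_i=s$. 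Moreover there can be only one factor: two factors would give noncompact indecomposables $N_i,N_j$ in different blocks of $B$, whence $\RHom_B(N_i,N_j)\cong 0$ and so $\RHom_A(i_*N_i,i_*N_j)\cong 0$, again contradicting Lemma~\ref{Lemma-Hom-RHom}(2). Thus $B$ is derived equivalent to $\Lambda(s,s,t')\oplus B'$ for a single $t'\ge 0$.

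Finally I bound $t'$ by counting noncompact indecomposables up to shift, which is a derived invariant. By Lemma~\ref{Lemma-Hom-RHom} the algebra $A$ has exactly $s+t$ of them, namely $X_0,\dots,X_{s-1},Y_{-t},\dots,Y_{-1}$, while $B$ has exactly $s+t'$ (those of the block $\Lambda(s,s,t')$, since $B'$ has finite global dimension). As $i_*$ is fully faithful and commutes with shifts, it sends distinct shift-classes of noncompact indecomposables of $B$ to distinct shift-classes in $A$, giving $s+t'\le s+t$, i.e.\ $t'\le t$. I expect the main obstacle to be the second paragraph: showing that noncompactness cannot be split across $B$ and $C$, which is exactly where the strong non-orthogonality in Lemma~\ref{Lemma-Hom-RHom}(2) must be played against the recollement identity $j^!i_*=0$; the remaining steps are bookkeeping with the classification and the period invariant from Lemma~\ref{Lemma-Hom-RHom}(1).
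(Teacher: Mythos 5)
Your proof is correct, and its core coincides with the paper's: after upgrading the recollement so that all six functors restrict to $\mathcal{D}^b(\mod)$ and preserve/reflect compactness, you pin down the infinite-global-dimension part by playing the periodicity statement of Lemma~\ref{Lemma-Hom-RHom}(1) (forcing $s_i=s$) and the non-orthogonality statement of Lemma~\ref{Lemma-Hom-RHom}(2) (forbidding a second infinite block inside $B$, and, via $j^!i_*=0$, forbidding an infinite block in $C$) against the full faithfulness of $i_*$ --- this is exactly the paper's Steps 1--3, up to your using $j_!$ where the paper uses $j_*$. You genuinely differ in two auxiliary steps, in both cases replacing a citation by a self-contained argument: (i) for ``at least one of $B,C$ has infinite global dimension'' the paper quotes \cite[Proposition 2.14]{AKLY13}, whereas you deduce it directly from the recollement triangle $j_!j^!X_0\to X_0\to i_*i^*X_0\to$ together with the noncompactness of $X_0$; (ii) for the bound $t'\le t$ the paper quotes the Grothendieck-group rank inequality of \cite[Proposition 6.5]{AKLY13}, whereas you count shift-orbits of noncompact indecomposables in $\mathcal{D}^b(\mod)$ ($s+t$ for $A$ by Lemma~\ref{Lemma-Hom-RHom}, $s+t'$ for $B$ since $B'$ contributes none) and use that $i_*$ is injective on such orbits. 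Both substitutions are valid; they make the proof more elementary and less dependent on \cite{AKLY13}, at the cost of being a little longer, and the rank inequality would in fact give the marginally sharper conclusion $t'\le t-r(B')$, which the lemma does not need.
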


\begin{proof}
If $\mathcal{D}A$ admits a recollement relative to $\mathcal{D}B$
and $\mathcal{D}C$, then both $B$ and $C$ are derived discrete by
Proposition~\ref{Proposition-DerivedDiscrete}. Since $\gl A =
\infty$, at least one of $B$ and $C$ is of infinite global dimension
\cite[Proposition 2.14]{AKLY13}. Without loss of generality, we
assume $\gl B = \infty$. Thus $B$ is derived equivalent to
$\tilde{B} = \Lambda(s',s',t') \oplus B'$ for some integers $s' \geq
1$ and $t' \geq 0$ and some algebra $B'$. Therefore, there is a
recollement
$$\xymatrix@!=4pc{ \mathcal{D}\tilde{B} \ar[r]^{i_*}
& \mathcal{D}A \ar@<-3ex>[l]_{i^*} \ar@<+3ex>[l]_{i^!} \ar[r]^{j^!}
& \mathcal{D}C \ar@<-3ex>[l]_{j_!} \ar@<+3ex>[l]_{j_*}}.$$ The
proposition will be proved by the following three steps.

\medskip

{\it Step 1.} We claim $s'=s$ and $t' \leq t$.

\medskip

For this, let $S'_0$ be the simple $\Lambda(s',s',t')$-module
corresponding to the vertex $0$ of the quiver $Q(s',t')$. Then
$S'_0$ is naturally a $\tilde{B}$-module as well and
$\Hom_{\mathcal{D}\tilde{B}}(S'_0,S'_0[h]) \cong k$ if
$h=0,s',2s',\cdots$, and $=0$ otherwise. Since $S'_0$ is a
noncompact indecomposable object in $\mathcal{D}^b(\tilde{B})$ and
$i^*$ preserves compactness, we have $i_*S'_0$ is a noncompact
indecomposable object in $\mathcal{D}^b(A)$. By
Lemma~\ref{Lemma-Hom-RHom}, we have
$\Hom_{\mathcal{D}A}(i_*S'_0,i_*S'_0[h]) \cong k$ if
$h=0,s,2s,\cdots$, and $=0$ otherwise. Therefore, $s'=s$, since the
triangle functor $i_*$ is fully faithful. It follows from
\cite[Proposition 6.5]{AKLY13} that the rank $r(\tilde{B})$ of the
Grothendieck group $K_0(\tilde{B})$ of $\tilde{B}$ is not larger
than the rank $r(A)$ of the Grothendieck group $K_0(A)$ of $A$.
Hence, $t' \leq t$.

\medskip

{\it Step 2.} We claim $\gl B' < \infty$.

\medskip

Assume on the contrary $\gl B' = \infty$. Then $B'$ is derived
discrete by Proposition~\ref{Proposition-DerivedDiscrete} and of
infinite global dimension. Thus $B'$ is derived equivalent to
$\Lambda(s'',s'',t'') \oplus B''$ for some integers $s'' \geq 1$ and
$t'' \geq 0$ and some algebra $B''$. Therefore, $i_*$ induces a
triangle functor
$$i'_* : \mathcal{D}(\Lambda(s',s',t')) \oplus
\mathcal{D}(\Lambda(s'',s'',t'')) \oplus \mathcal{D}B'' \simeq
\mathcal{D}\tilde{B} \rightarrow \mathcal{D}A.$$ Let $S''_0$ be the
simple $\Lambda(s'',s'',t'')$-module corresponding to the vertex $0$
of the quiver $Q(s'',t'')$. Then $\RHom_A(i'_*S'_0,i'_*S''_0) \cong
0$. However, $i'_*S'_0$ and $i'_*S''_0$ are pairwise non-isomorphic
noncompact indecomposable objects in $\mathcal{D}^b(A)$, and thus we
have $\RHom_A(i'_*S'_0,i'_*S''_0) \ncong 0$ by
Lemma~\ref{Lemma-Hom-RHom}. It is a contradiction.

\medskip

{\it Step 3.} We claim $\gl C < \infty$.

\medskip

Assume on the contrary $\gl C = \infty$. Then $C$ is derived
discrete by Proposition~\ref{Proposition-DerivedDiscrete} and of
infinite global dimension. Thus $C$ is derived equivalent to
$\tilde{C} = \Lambda(s''',s''',t''') \oplus C'$ for some integers
$s''' \geq 1$ and $t''' \geq 0$ and some algebra $C'$. Therefore,
there is a recollement
$$\xymatrix@!=4pc{ \mathcal{D}\tilde{B} \ar[r]^{i_*}
& \mathcal{D}A \ar@<-3ex>[l]_{i^*} \ar@<+3ex>[l]_{i^!} \ar[r]^{j^!}
& \mathcal{D}\tilde{C} \ar@<-3ex>[l]_{j_!} \ar@<+3ex>[l]_{j_*}}.$$
where both $i_*$ and $j_*$ can be restricted to
$\mathcal{D}^b(\mod)$ by Proposition~\ref{Proposition-n-recollement}
and \cite[Lemma 2]{HQ14}. Let $S'''_0$ be the simple
$\Lambda(s''',s''',t''')$-module corresponding to the vertex $0$ of
the quiver $Q(s''',t''')$. Then $\RHom _A(i_*S'_0,j_*S'''_0) \cong
0$. However, $i_*S'_0$ and $j_*S'''_0$ are noncompact indecomposable
objects in $\mathcal{D}^b(A)$, and thus $\RHom _A(i_*S'_0,j_*S'''_0)
\ncong 0$ by Lemma~\ref{Lemma-Hom-RHom} (2). It is a contradiction.
\end{proof}

As a consequence of Lemma~\ref{Lemma-Recollement-B-C}, we have the
following proposition which was obtained independently by Angeleri
H\"{u}gel, K\"{o}nig, Liu and Yang in different way in the earlier
version of \cite{AKLY13}.

\begin{proposition}\label{Proposition-2-TrunCycAlg}
Every 2-truncated cycle algebra is $n$-derived-simple for all $n
\geq 1$.
\end{proposition}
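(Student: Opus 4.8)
The plan is to show that a $2$-truncated cycle algebra $\Lambda(s,s,0)$ admits no nontrivial $n$-recollement for any $n \geq 1$, and since $n$-derived-simplicity for larger $n$ is implied by that for $n=1$, it suffices to treat the case $n=1$, i.e. to rule out nontrivial recollements of $\mathcal{D}A$. So I would argue by contradiction: suppose $A = \Lambda(s,s,0)$ has a nontrivial recollement of $\mathcal{D}A$ relative to $\mathcal{D}B$ and $\mathcal{D}C$, where nontrivial means neither $B$ nor $C$ is derived trivial (equivalently, neither $\mathcal{D}B$ nor $\mathcal{D}C$ is zero). The goal is to derive a contradiction purely from the structural constraints supplied by Lemma~\ref{Lemma-Recollement-B-C}.

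The key observation is that $A = \Lambda(s,s,0)$ is the special case $t = 0$ of Lemma~\ref{Lemma-Recollement-B-C}. Applying that lemma, in either of its two symmetric alternatives, forces one of the two ``corner'' algebras to be derived equivalent to $\Lambda(s,s,t') \oplus (\text{finite global dimension part})$ with $0 \leq t' \leq t = 0$, hence with $t' = 0$. So, say in the first alternative, $\gl C < \infty$ and $B$ is derived equivalent to $\Lambda(s,s,0) \oplus B'$ with $\gl B' < \infty$. First I would compare ranks of Grothendieck groups: by \cite[Proposition 6.5]{AKLY13} the ranks must add up, $r(B) + r(C) = r(A)$, and since $A = \Lambda(s,s,0)$ has $r(A) = s$ (the quiver $Q(s,0)$ has exactly $s$ vertices $0,1,\ldots,s-1$), while the summand $\Lambda(s,s,0)$ of $B$ already contributes rank $s$, there is no room left for $B'$ or for $C$.

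More precisely, the rank count gives $r(B) \geq r(\Lambda(s,s,0)) = s = r(A)$, which combined with $r(B) + r(C) \leq r(A)$ forces $r(B') = 0$ and $r(C) = 0$; that is, $B'$ and $C$ are derived trivial, so $B$ is derived equivalent to $A$ itself and the recollement is the trivial one. This contradicts the assumption that the recollement is nontrivial. The symmetric alternative of Lemma~\ref{Lemma-Recollement-B-C} is handled identically with the roles of $B$ and $C$ interchanged. I expect the main obstacle to be making the rank bookkeeping fully rigorous: I need to confirm that the rank of the Grothendieck group of a recollement genuinely splits additively (which is exactly what \cite[Proposition 6.5]{AKLY13} provides, and is what Step~1 of the preceding lemma already invoked), and to verify the clean computation $r(\Lambda(s,s,0)) = s$ from the shape of $Q(s,0)$. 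Once the additivity of ranks across the recollement is in hand, the contradiction is immediate and the proposition follows for $n = 1$; the extension to all $n \geq 1$ then requires only the elementary remark that an $n$-recollement contains an underlying $1$-recollement, so $1$-derived-simplicity yields $n$-derived-simplicity for every $n \geq 1$.
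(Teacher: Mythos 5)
Your proposal is correct and follows essentially the same route as the paper: apply Lemma~\ref{Lemma-Recollement-B-C} with $t=0$ to force $B \simeq \Lambda(s,s,0)\oplus B'$ (up to the symmetric alternative), then use the additivity of Grothendieck group ranks from \cite[Proposition 6.5]{AKLY13} to conclude $r(B')=0=r(C)$ and hence triviality. The only cosmetic difference is your preliminary reduction to $n=1$; the paper instead applies the same argument directly to an arbitrary $n$-recollement, which amounts to the same observation that an $n$-recollement contains an underlying recollement.
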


\begin{proof}
Assume $A = \Lambda (s,s,0)$ and $\mathcal{D}A$ admits an
$n$-recollement relative to $\mathcal{D}B$ and $\mathcal{D}C$.
According to Lemma~\ref{Lemma-Recollement-B-C}, we may assume that
$\gl C < \infty$ and $B$ is derived equivalent to $\Lambda(s,s,0)
\oplus B'$ for some algebra $B'$. By \cite[Proposition 6.5]{AKLY13},
we have $r(A) = r(B)+r(C) = r(A)+r(B')+r(C)$. Thus $r(B') = 0 =
r(C)$. Hence $B' = 0 = C$, i.e., the $n$-recollement is trivial.
\end{proof}

The next proposition implies that dropping an idecomposable
projective module with projective radical in \cite{IZ92} induces a
recollement of derived categories of algebras.

\begin{proposition} \label{Proposition-DropProjMod}
Let $A$ be a basic algebra, $e$ a primitive idempotent, $\bar{e} :=
1-e$, and $\rad eA$ a projective $A$-module. Then $\mathcal{D}A$
admits a recollement relative to $\mathcal{D}(\bar{e}A\bar{e})$ and
$\mathcal{D}k$.
\end{proposition}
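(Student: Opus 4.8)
The plan is to realize the desired recollement as the standard recollement induced by the idempotent $\bar e$, namely
$$\mathcal{D}(A/A\bar{e}A) \longrightarrow \mathcal{D}A \longrightarrow \mathcal{D}(\bar{e}A\bar{e}),$$
with $j^*=(-)\bar e = -\otimes_A A\bar e$ and $i_*$ the restriction of scalars along $A\to A/A\bar e A$. Such a diagram is a recollement precisely when $A\bar e A$ is a stratifying ideal, i.e. when $\Tor^{\bar e A\bar e}_i(A\bar e,\bar e A)=0$ for all $i>0$ and the multiplication map $A\bar e\otimes_{\bar e A\bar e}\bar e A\to A\bar e A$ is an isomorphism (see e.g. \cite{AKLY13}). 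So the whole proof reduces to two things: extracting this homological condition from the hypothesis that $\rad eA$ is projective, and identifying the two outer terms as $\mathcal{D}k$ and $\mathcal{D}(\bar e A\bar e)$.

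First I would show $eAe\cong k$. Since $A$ is basic and $e$ is primitive, $eAe=\End_A(eA)^{\op}$ is local with $eAe/\rad(eAe)\cong k$, so it suffices to prove $\rad(eAe)=e(\rad A)e=0$. Suppose not, and pick $0\neq p\in e(\rad A)e$, a nonzero path of length $\geq 1$ from the vertex $v$ of $e$ back to itself; then $p\in\rad(eA)$ and $e\cdot p=p$. Writing the projective right module $\rad(eA)\cong\bigoplus_j P_{w_j}$ as a sum of indecomposable projectives and tracking the idempotent $e$, the component of $p$ in any summand $P_{w_j}$ with $w_j\neq v$ must vanish, so $P_v$ occurs as a direct summand of $\rad(eA)$; this is impossible since $\dim_k\rad(eA)=\dim_k eA-1<\dim_k eA$. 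Hence $eAe=k$. Two consequences follow from the Peirce decomposition: $A/A\bar e A\cong eAe/(eA\bar e A e)=eAe=k$, and $\rad(eA)=e(\rad A)=eA\bar e$ as right $A$-modules, the diagonal part $e(\rad A)e$ now being zero.

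The crux is then to deduce that $A\bar e$ is projective as a right $\bar e A\bar e$-module. By the previous step $\rad(eA)=eA\bar e$, which is projective over $A$ by hypothesis, say $\rad(eA)\cong\bigoplus_j P_{w_j}$. Because $(\rad(eA))e=(eA\bar e)e=0$ (again $eAe=k$), each summand satisfies $P_{w_j}e=e_{w_j}Ae=0$, i.e. there is no path from $w_j$ to $v$; in particular no path out of $w_j$ passes through $v$, so $e_{w_j}A\bar e=e_{w_j}\bar e A\bar e$ is exactly the indecomposable projective $\bar e A\bar e$-module at $w_j$. Thus the functor $(-)\bar e$ carries each summand to a projective $\bar e A\bar e$-module, so $eA\bar e=\rad(eA)$ is projective over $\bar e A\bar e$; combined with the free summand in the Peirce decomposition $A\bar e=eA\bar e\oplus\bar e A\bar e$, this shows $A\bar e$ is projective, hence flat, over $\bar e A\bar e$. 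Consequently $\Tor^{\bar e A\bar e}_i(A\bar e,\bar e A)=0$ for $i>0$, and the multiplication map is then checked to be an isomorphism summand by summand through the same decomposition.

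With the stratifying condition in hand, the standard theory yields the recollement $\mathcal{D}(A/A\bar e A)\to\mathcal{D}A\to\mathcal{D}(\bar e A\bar e)$, and substituting $A/A\bar e A\cong k$ gives a recollement of $\mathcal{D}A$ relative to $\mathcal{D}(\bar e A\bar e)$ and $\mathcal{D}k$, as claimed (the order in which the two outer terms are listed being immaterial here). I expect the main obstacle to be the projectivity transfer in the crux step, that is, extracting from the single hypothesis that $\rad eA$ is projective over $A$ both the vanishing $eAe=k$ and the projectivity of $A\bar e$ over $\bar e A\bar e$; once these are established the recollement is formal. The verification that the multiplication map $A\bar e\otimes_{\bar e A\bar e}\bar e A\to A\bar e A$ is an isomorphism is routine, and I would not carry it out in detail.
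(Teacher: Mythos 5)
There is a genuine gap, and it sits at the very first step: the claim that $eAe\cong k$ is \emph{false} under the stated hypotheses. Consider the algebra $A$ given by the quiver with two vertices $v,w$ and arrows $a\colon v\to w$, $b\colon w\to v$, bound by the single zero-relation killing the oriented $2$-cycle based at $w$ (up to relabelling this is $\Lambda(1,2,0)$, and the vertex $v$ is exactly the vertex $0$ at which the paper applies this proposition in the proof of Theorem 1, so the case cannot be dismissed as pathological). Then $e_vA$ has basis $\{e_v,a,ab\}$, $e_wA$ has basis $\{e_w,b\}$, and $e_w\mapsto a$, $b\mapsto ab$ defines an isomorphism $e_wA\stackrel{\sim}{\to}\rad(e_vA)$ of right $A$-modules, so the hypothesis holds for $e=e_v$; yet $eAe=\langle e_v,ab\rangle\cong k[x]/(x^2)\neq k$. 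The precise point where your argument breaks is the assertion that ``the component of $p$ in any summand $P_{w_j}$ with $w_j\neq v$ must vanish'': the isomorphism $\rad(eA)\cong\bigoplus_jP_{w_j}$ is only an isomorphism of right modules and does not respect left multiplication by $e$, so the component of $p$ in $P_{w_j}$ is merely some element of $e_{w_j}Ae$, which is nonzero whenever a suitable path exists (in the example, $p=ab$ corresponds to $b\in e_wAe_v\neq 0$). Everything downstream collapses with this: in the example $\rad(eA)\neq eA\bar{e}$, the summands do not satisfy $P_{w_j}e=0$, and your derivation of the stratifying condition for $A\bar{e}A$ is unsupported.

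Two further remarks. First, even where the stratifying-ideal route could be made to work, the concluding swap of the two outer terms is not ``immaterial'': a recollement with $\mathcal{D}(A/A\bar{e}A)$ as subcategory and $\mathcal{D}(\bar{e}A\bar{e})$ as quotient is not formally equivalent to one with the roles exchanged; the paper gets the stated order by first producing the recollement in the other order and then upgrading it to a $2$-recollement via \cite[Proposition 1]{HQ14} (using compactness of the glued objects, which is where $\pd_A(eA/\rad eA)\leq 1$ enters) before reading off the shifted layers. Second, the paper's own proof bypasses Peirce-component computations entirely: it takes the compact exceptional objects $X=\bar{e}A$ and $Y=eA/\rad eA$, observes $\RHom_A(X,Y)\cong 0$, $\End_A(X)\cong\bar{e}A\bar{e}$, $\End_A(Y)\cong k$, and uses the hypothesis only through the two consequences $\rad eA\in\add(\bar{e}A)$ (forced by a dimension count, whence $A\in\thick(X\oplus Y)$) and $\pd_AY\leq 1$; the recollement then follows from \cite[Proposition 1]{Han14}. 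If you want to salvage your approach, you should work with these consequences rather than with any vanishing of $e(\rad A)e$.
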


\begin{proof}
Let $X$ be the projective $A$-module $\bar{e}A$, and $Y$ the simple
$A$-module $eA/\rad eA$. Since $\rad eA$ is projective, we have
$\pd_A Y \leq 1$. It is easy to see that both $X$ and $Y$ are
compact exceptional objects in $\mathcal{D}A$ with $\End_A(X) \cong
\bar{e}A\bar{e}$ and $\End_A(Y) \cong k$, and $\RHom_A(X,Y) \cong
0$. Moreover, for all $Z \in \mathcal{D}A$ satisfying $\RHom_A(X,Z)
\cong 0 \cong \RHom_A(Y,Z)$, we have $\RHom_A(X \oplus Y,Z) \cong
0$. Clearly, $\rad eA \in \thick X$, the smallest thick subcategory
of $\mathcal{D}A$ containing $X$, i.e., the smallest full
triangulated subcategory of $\mathcal{D}A$ containing $X$ and closed
under direct summands. Thus $eA$, and further $A \cong eA \oplus X$,
belongs to $\thick (X \oplus Y)$. Hence $Z \cong \RHom_A(A,Z) \cong
0$. Therefore, $X$ and $Y$ determine a recollement of $\mathcal{D}A$
relative to $\mathcal{D}k$ and $\mathcal{D}(\bar{e}A\bar{e})$ by
\cite[Proposition 1]{Han14}, and further a 2-recollement of
$\mathcal{D}A$ relative to $\mathcal{D}k$ and
$\mathcal{D}(\bar{e}A\bar{e})$ by \cite[Proposition 1]{HQ14}. Hence,
$\mathcal{D}A$ admits a recollement relative to
$\mathcal{D}(\bar{e}A\bar{e})$ and $\mathcal{D}k$.
\end{proof}

Let $B$ be a bound quiver algebra, $M$ a right $B$-module and $N$ a
left $B$-module. Then $\left[\begin{array}{cc} B & 0 \\ M & k
\end{array}\right]$ is called a {\it one-point extension} of $B$,
and $\left[\begin{array}{cc} k & 0 \\ N & B
\end{array}\right]$ is called a {\it one-point coextension} of $B$
(Ref. \cite{Rin84}). The following lemma implies that one-point
(co)extension induces a recollement, which will be used frequently
later.

\begin{lemma} \label{Lemma-One-Point-CoExtension} Let $A$ be a one-point
(co)extension of a bound quiver algebra $B$. Then $\mathcal{D}A$
admits a recollement relative to $\mathcal{D}B$ and $\mathcal{D}k$.
\end{lemma}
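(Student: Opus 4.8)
The plan is to follow the strategy of the proof of Proposition~\ref{Proposition-DropProjMod}: produce two compact exceptional objects in $\mathcal{D}A$ whose endomorphism algebras are $B$ and $k$, which satisfy a one-sided $\RHom$-orthogonality and jointly generate, and then apply \cite[Proposition 1]{Han14} and \cite[Proposition 1]{HQ14}.

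I would first treat the one-point extension $A = \left[\begin{array}{cc} B & 0 \\ M & k\end{array}\right]$. Let $e = \left[\begin{array}{cc} 0 & 0 \\ 0 & 1\end{array}\right]$ be the idempotent at the extension vertex and $\bar{e} = 1 - e$, so that $eAe \cong k$ and $\bar{e}A\bar{e} \cong B$. Put $X = eA$ and $Y = \bar{e}A$; both are direct summands of the regular module $A_A$, hence projective, compact, and exceptional, with $\End_A(X) \cong eAe \cong k$ and $\End_A(Y) \cong \bar{e}A\bar{e} \cong B$. Using $\Hom_A(fA, gA) \cong gAf$ for idempotents $f, g$, a direct matrix computation gives $\Hom_A(X, Y) \cong \bar{e}Ae = 0$ (this vanishing is precisely the triangularity of the one-point extension, i.e.\ the new vertex is a source), while $\Hom_A(Y, X) \cong eA\bar{e} \cong M$. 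As $X$ and $Y$ are projective there are no higher self- or mutual extensions, so $\RHom_A(X, Y) \cong 0$. Generation is immediate, since $X \oplus Y = A$ and hence $A \in \thick(X \oplus Y)$: any $Z$ with $\RHom_A(X, Z) \cong 0 \cong \RHom_A(Y, Z)$ satisfies $Z \cong \RHom_A(A, Z) \cong 0$. By \cite[Proposition 1]{Han14} the pair $(X, Y)$ then determines a recollement of $\mathcal{D}A$ relative to $\mathcal{D}(\End_A Y) = \mathcal{D}B$ and $\mathcal{D}(\End_A X) = \mathcal{D}k$, which is exactly the order claimed.

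For the one-point coextension $A = \left[\begin{array}{cc} k & 0 \\ N & B\end{array}\right]$ I would run the identical argument with $e = \left[\begin{array}{cc} 1 & 0 \\ 0 & 0\end{array}\right]$. Again $eAe \cong k$ and $\bar{e}A\bar{e} \cong B$, but now triangularity yields the opposite orthogonality $\Hom_A(\bar{e}A, eA) \cong eA\bar{e} = 0$, so \cite[Proposition 1]{Han14} produces a recollement relative to $\mathcal{D}k$ and $\mathcal{D}B$ in the reverse order. I would then invoke \cite[Proposition 1]{HQ14} to extend it to a $2$-recollement, and read off from that $2$-recollement a recollement of $\mathcal{D}A$ relative to $\mathcal{D}B$ and $\mathcal{D}k$ in the stated order. (Alternatively, the coextension case can be deduced from the extension case by passing to opposite algebras, since $A^{\op}$ is a one-point extension of $B^{\op}$ and recollements dualize under $(-)^{\op}$, though this also reverses the two sides and so still requires the reflection step.)

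Every step here is a routine matrix or projective-module computation; the only places that need care are matching the direction of the $\RHom$-orthogonality to the hypotheses of \cite[Proposition 1]{Han14} and, in the coextension case, using the $2$-recollement of \cite[Proposition 1]{HQ14} to recover the precise order of $\mathcal{D}B$ and $\mathcal{D}k$ demanded by the statement. I do not anticipate a genuine obstacle beyond this bookkeeping.
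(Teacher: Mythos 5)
Your proof is correct, but it takes a more self-contained route than the paper. The paper disposes of both cases in one line by observing that one-point (co)extensions are special triangular matrix algebras and citing \cite[Example 1 (2)]{HQ14}; it then remarks that the coextension case can alternatively be deduced from Proposition~\ref{Proposition-DropProjMod} applied to the idempotent at the coextension vertex, which is exactly your second computation (your pair $(\bar{e}A,\,eA)$ with $eA$ simple projective is the pair $(X,Y)$ of that proposition in the degenerate situation $\rad eA=0$). What you do differently is to unpack the triangular-matrix citation for the extension case as well, exhibiting the two projective summands $eA$ and $\bar{e}A$ as compact exceptional generators with the one-sided orthogonality $\bar{e}Ae=0$ and invoking \cite[Proposition 1]{Han14} directly; your matrix computations are all correct, and your handling of the order of the two outer terms --- immediate in the extension case, obtained by reflecting through the $2$-recollement of \cite[Proposition 1]{HQ14} in the coextension case --- is precisely the manoeuvre the paper itself performs at the end of the proof of Proposition~\ref{Proposition-DropProjMod}. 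The paper's citation is shorter and covers arbitrary triangular matrix algebras in one stroke; your version makes explicit the source/sink asymmetry between extension and coextension, and hence why the reflection step is needed in exactly one of the two cases.
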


\begin{proof} Both one-point extensions and one-point coextensions are
special triangular matrix algebras. So this lemma follows from
\cite[Example 1 (2)]{HQ14}. In fact, the one-point coextension case
also can be deduced from Proposition~\ref{Proposition-DropProjMod}
by considering the idempotent element corresponding to the
coextension vertex which is a sink in the quiver of $A$ and
corresponds to a simple projective $A$-module.
\end{proof}

The following result is our main theorem in this section.

\begin{theorem} \label{Theorem-DerivedSimple}
For any positive integer $n$, a derived discrete algebra is
$n$-derived-simple if and only if it is derived equivalent to either
$k$ or a 2-truncated cycle algebra.
\end{theorem}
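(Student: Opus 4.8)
The plan is to establish both directions of the equivalence, using the structural decomposition of derived discrete algebras recalled in Section 2 together with the rigidity results just proved. First I would verify the ``if'' direction, which is essentially done: the field $k$ is trivially $n$-derived-simple since $\mathcal{D}k$ admits no nontrivial recollement of derived categories of algebras (its Grothendieck group has rank $1$), and Proposition~\ref{Proposition-2-TrunCycAlg} already shows that every $2$-truncated cycle algebra $\Lambda(s,s,0)$ is $n$-derived-simple for all $n \geq 1$. So the forward implication is immediate from results in the excerpt.

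The substance lies in the ``only if'' direction: I must show that a derived discrete algebra $A$ which is $n$-derived-simple is derived equivalent to $k$ or to some $\Lambda(s,s,0)$. I would argue by contraposition, assuming $A$ is \emph{not} derived equivalent to $k$ or to a $2$-truncated cycle algebra, and exhibit a nontrivial $n$-recollement. Invoking the decomposition $A \simeq \oplus_{p=1}^m A_p$ recalled before Lemma~\ref{Lemma-Hom-RHom}, I first reduce to the connected case: if $m \geq 2$, the derived category splits as a product and one obtains a nontrivial recollement trivially (a summand gives a full embedding with the complementary summand as the quotient). So assume $A$ is connected, hence derived equivalent to one of $\Lambda(s,s,t)$, $\Lambda(r,s,t)$ with $s>r$, or a basic connected piecewise hereditary algebra of Dynkin type.

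The remaining cases are handled by producing recollements that strip off one vertex at a time. For the piecewise hereditary Dynkin case and the finite-global-dimension gentle case $\Lambda(r,s,t)$ with $s>r$, these algebras have finite global dimension and a source or sink vertex in their quiver; I would use Proposition~\ref{Proposition-DropProjMod} or Lemma~\ref{Lemma-One-Point-CoExtension} to peel off that vertex, producing a recollement relative to $\mathcal{D}(\bar{e}A\bar{e})$ and $\mathcal{D}k$, which Proposition~\ref{Proposition-n-recollement} extends to an $n$-recollement. Since such an $A$ has rank $r(A) \geq 2$ (it is not $k$), this recollement is nontrivial, so $A$ is not $n$-derived-simple. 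For the infinite-global-dimension case $A = \Lambda(s,s,t)$ with $t \geq 1$ (the $t=0$ subcase being exactly the excluded $2$-truncated cycle algebra), the vertex $-t$ is a source corresponding to a one-point coextension of $\Lambda(s,s,t-1)$, so Lemma~\ref{Lemma-One-Point-CoExtension} again yields a nontrivial recollement relative to $\mathcal{D}(\Lambda(s,s,t-1))$ and $\mathcal{D}k$, extended to an $n$-recollement by Proposition~\ref{Proposition-n-recollement}.

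The main obstacle I anticipate is the bookkeeping for the finite-global-dimension cases: verifying that $\Lambda(r,s,t)$ with $s>r$ and each basic connected piecewise hereditary algebra of Dynkin type genuinely admits a suitable source or sink idempotent to which Proposition~\ref{Proposition-DropProjMod} or Lemma~\ref{Lemma-One-Point-CoExtension} applies. For piecewise hereditary algebras one may instead pass, up to derived equivalence, to the hereditary path algebra of a Dynkin quiver, where a source vertex manifestly gives a one-point extension and hence a recollement; care is needed to ensure the reduction algebra is not the forbidden $k$ or $\Lambda(s,s,0)$, but since we are only asserting \emph{existence} of a nontrivial $n$-recollement this suffices. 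I expect the rigidity encoded in Lemma~\ref{Lemma-Recollement-B-C} to be the conceptual heart guaranteeing that the $2$-truncated cycle algebras are exactly the infinite-global-dimension algebras that resist such decomposition.
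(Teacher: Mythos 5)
Your proposal follows essentially the same route as the paper: sufficiency from Proposition~\ref{Proposition-2-TrunCycAlg}, reduction to the connected case, and then peeling off vertices via Lemma~\ref{Lemma-One-Point-CoExtension} (for $t \geq 1$) and Proposition~\ref{Proposition-DropProjMod} (for $s > r$), with Proposition~\ref{Proposition-n-recollement} upgrading each recollement to an $n$-recollement. The one point where your stated justification would fail is exactly the obstacle you flag: for $\Lambda(r,s,0)$ with $s > r$ the quiver $Q(s,0)$ is a single oriented cycle, so there is \emph{no} source or sink vertex, and Lemma~\ref{Lemma-One-Point-CoExtension} is unavailable. The resolution is that Proposition~\ref{Proposition-DropProjMod} does not require a source or sink, only a primitive idempotent $e$ with $\rad eA$ projective; since $s > r$, the relations $\alpha_{s-1}\alpha_0, \ldots, \alpha_{s-r}\alpha_{s-r+1}$ do not wrap all the way around the cycle, and one checks that $\rad P_0 \cong P_1$ is projective, so the idempotent at vertex $0$ works --- this is precisely what the paper does. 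Your handling of the piecewise hereditary Dynkin case (pass to the path algebra of a Dynkin quiver and peel off a source by one-point extension) differs from the paper's proof of this theorem, which simply cites \cite[Theorem 5.7]{AKL12}, but it is sound and is in fact the argument the paper itself uses later in the proof of Theorem~\ref{Theorem-JH}; it has the small advantage of being self-contained.
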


\begin{proof}
{\it Sufficiency.} It follows from
Proposition~\ref{Proposition-2-TrunCycAlg}.

{\it Necessity.} Let $A$ be a basic $n$-derived-simple derived
discrete algebra. Then $A$ must be connected. If $A$ is a piecewise
hereditary algebra of Dynkin type then it is derived equivalent to
$k$ by \cite[Theorem 5.7]{AKL12}. Otherwise, $A$ is derived
equivalent to $\Lambda(r,s,t)$ for some $s \geq r \geq 1$ and $t
\geq 0$. If $t \geq 1$ then $A$ is a one-point extension of
$\Lambda(r,s,t-1)$. It follows from
Lemma~\ref{Lemma-One-Point-CoExtension} and
Proposition~\ref{Proposition-n-recollement} that $\mathcal{D}A$
admits a non-trivial $n$-recollement relative to
$\mathcal{D}\Lambda(r,s,t-1)$ and $\mathcal{D}k$, which contradicts
to the assumption. Hence, we have $t = 0$. If $s > r$ then applying
Proposition~\ref{Proposition-DropProjMod} to the algebra
$\Lambda(r,s,0)$ and the idempotent $e$ corresponding to the vertex
$0$ of the quiver $Q(s,0)$ and also
Proposition~\ref{Proposition-n-recollement}, we can obtain a
non-trivial $n$-recollement of $\mathcal{D}\Lambda(r,s,0)$, and thus
$\mathcal{D}A$. It contradicts to the assumption, which implies $s =
r$ and $A$ must be derived equivalent to a $2$-truncated cycle
algebra $\Lambda(s,s,0)$.
\end{proof}

\section{\large Jordan-H\"{o}lder theorems}

\indent\indent In this section, we will show that the
Jordan-H\"{o}lder theorems hold for all kinds of derived categories
of derived discrete algebras.

\begin{definition}{\rm
Let $n$ be a positive integer. An {\it $n$-composition-series} of
the derived category $\mathcal{D}A$ of an algebra $A$ is a chain of
derived categories of algebras linked by fully faithful triangle
functors
$$0 = \mathcal{D}B_l \stackrel{i_l}{\hookrightarrow} \mathcal{D}B_{l-1}
\stackrel{i_{l-1}}{\hookrightarrow} \cdots
\stackrel{i_2}{\hookrightarrow} \mathcal{D}B_1
\stackrel{i_1}{\hookrightarrow} \mathcal{D}B_0 = \mathcal{D}A$$ such
that, for all $p = 1,2, \cdots, l$, the triangle functor
$\mathcal{D}B_p \stackrel{i_p}{\hookrightarrow} \mathcal{D}B_{p-1}$
can be completed to an $n$-recollement
$$\xymatrix@!=4pc{ \mathcal{D}B_p \ar[r]^-{i_p} & \mathcal{D}B_{p-1}
\ar@<-3ex>[l] \ar@<+3ex>[l]^-{\vdots} \ar[r] & \mathcal{D}C_p
\ar@<-3ex>[l] \ar@<+3ex>[l]^-{\vdots} }$$ for some
$n$-derived-simple algebra $C_p$. In this case, $\mathcal{D}C_1,
\mathcal{D}C_2, \cdots, \mathcal{D}C_l$ are called the {\it
$n$-composition-factors} of $\mathcal{D}A$. Moreover,
$\mathcal{D}C_p$ is said to be of {\it multiplicity} $m_p$ if it
appears exactly $m_p$ times in the sequence $\mathcal{D}C_1,
\mathcal{D}C_2, \cdots, \mathcal{D}C_l$ up to derived equivalence.
}\end{definition}

\begin{remark}{\rm (1) By \cite[Proposition 6.5]{AKLY13}, we know
that the length $l$ of an $n$-composition-series of $\mathcal{D}A$
is not larger than the rank $r(A)$ of the Grothendieck group
$K_0(A)$ of $A$.

(2) If $n \neq n'$ then it is possible that the length of an
$n$-composition-series of $\mathcal{D}A$ is not equal to the length
of an $n'$-composition-series of $\mathcal{D}A$. For example, let
$A$ be the two-point algebra given in \cite[Example 5.8]{AKLY13}
which is 2-derived simple but not 1-derived-simple. Then the length
of any $1$-composition-series of $\mathcal{D}A$ is 2. However, the
length of any $2$-composition-series of $\mathcal{D}A$ is 1.

(3) It is possible that the length of one $n$-composition-series of
$\mathcal{D}A$ is equal to the length of the other
$n$-composition-series of $\mathcal{D}A$ but two
$n$-composition-series have different $n$-composition-factors, so
the Jordan-H\"{o}lder theorems do not hold for derived categories of
algebras in general. For example, let $A$ be the two-point algebra
given in \cite[Example 7.6]{AKLY13}. Then $\mathcal{D}A$ admits two
$1$-composition-series with completely different
$1$-composition-factors.

(4) According to \cite[Theorem 5.7 and Corollary 5.9]{AKL12}, for
any piecewise hereditary algebra $A$ and $n =1,2,3$, $\mathcal{D}A$
admits an $n$-composition-series with only $n$-composition-factors
$\mathcal{D}k$ of multiplicity $r(A)$ which are independent of the
choice of $n$-composition-series up to derived equivalence. Thus the
Jordan-H\"{o}lder theorem holds for the unbounded (resp. upper
bounded, lower bounded, bounded) derived category of any piecewise
hereditary algebra. }\end{remark}

\begin{lemma} \label{Lemma-DirectSumRecollement}
Let $A_1,\cdots,A_m$ be algebras, $A = \oplus_{p=1}^m A_p$ and $n$ a
positive integer. If $\mathcal{D}A_p$ admits an
$n$-composition-series with $n$-composition-factors
$\mathcal{D}C_{p,1}, \cdots, \mathcal{D}C_{p,l_p}$ for all $1 \leq p
\leq m$ then $\mathcal{D}A$ admits an $n$-composition-series with
$n$-composition-factors $\mathcal{D}C_{1,1}, \cdots,
\mathcal{D}C_{1,l_1}, \cdots \cdots, \mathcal{D}C_{m,1}, \cdots,
\mathcal{D}C_{m,l_m}$.
\end{lemma}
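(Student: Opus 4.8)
The plan is to reduce to the case $m = 2$ by an immediate induction on $m$, and then to build the $n$-composition-series of $\mathcal{D}A$ by concatenating the given series for $\mathcal{D}A_1$ with an ``augmented'' copy of the given series for $\mathcal{D}A_2$. The starting point is the standard decomposition $\mathcal{D}(A_1 \oplus A_2) \simeq \mathcal{D}A_1 \oplus \mathcal{D}A_2$ of triangulated categories, which comes from the fact that a module over $A_1 \oplus A_2$ is nothing but a pair of modules.

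The key technical observation is an \emph{augmentation principle}: if
$$\mathcal{D}B_p \stackrel{i_p}{\hookrightarrow} \mathcal{D}B_{p-1} \to \mathcal{D}C_p$$
completes to an $n$-recollement, then for any algebra $D$ the diagram
$$\mathcal{D}(D \oplus B_p) \hookrightarrow \mathcal{D}(D \oplus B_{p-1}) \to \mathcal{D}C_p$$
also completes to an $n$-recollement, with the \emph{same} $n$-derived-simple quotient $\mathcal{D}C_p$. The functors are obtained by placing the identity on the $\mathcal{D}D$ summand and the original recollement functors on the $\mathcal{D}B$ summand; since the recollement triangles and the orthogonality relations hold componentwise (the $\mathcal{D}D$ part contributing only split triangles $0 \to X \to X$), every axiom (R1)--(R4) is inherited, layer by layer.

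With the augmentation principle in hand, I would first apply it with $D = A_1$ to each step of the given $n$-composition-series $0 = \mathcal{D}B_{2,l_2} \hookrightarrow \cdots \hookrightarrow \mathcal{D}B_{2,0} = \mathcal{D}A_2$ of $\mathcal{D}A_2$. This produces a chain
$$\mathcal{D}A_1 = \mathcal{D}(A_1 \oplus B_{2,l_2}) \hookrightarrow \cdots \hookrightarrow \mathcal{D}(A_1 \oplus B_{2,0}) = \mathcal{D}A$$
of fully faithful triangle functors, each completing to an $n$-recollement with $n$-derived-simple quotients $\mathcal{D}C_{2,1}, \cdots, \mathcal{D}C_{2,l_2}$. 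I then prepend the given $n$-composition-series $0 = \mathcal{D}B_{1,l_1} \hookrightarrow \cdots \hookrightarrow \mathcal{D}B_{1,0} = \mathcal{D}A_1$ of $\mathcal{D}A_1$, whose top term coincides with the bottom term of the augmented chain. Concatenating yields an $n$-composition-series of $\mathcal{D}A$ whose $n$-composition-factors are, in order, $\mathcal{D}C_{1,1}, \cdots, \mathcal{D}C_{1,l_1}, \mathcal{D}C_{2,1}, \cdots, \mathcal{D}C_{2,l_2}$, as required; the induction on $m$ then finishes the general case.

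The only step that is not purely formal is verifying the augmentation principle, i.e. that adjoining an orthogonal direct factor with identity functors genuinely yields an $n$-recollement rather than merely a compatible diagram of functors. The main point to check is that the full embeddings and the recollement triangles of (R4) survive: for an object $(X_1, X_2) \in \mathcal{D}D \oplus \mathcal{D}B_{p-1}$ the required triangles split as the direct sum of the trivial triangle on $X_1$ and the original recollement triangle on $X_2$, while the adjunctions of the augmented functors are the direct sums of the identity adjunction and the original ones. Once this is confirmed for a single recollement, the $n$-fold version follows since an $n$-recollement is precisely a sequence of compatible recollements.
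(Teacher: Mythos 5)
Your proposal is correct and follows essentially the same route as the paper: the paper's proof rests on exactly the same augmentation principle (an $n$-recollement of $\mathcal{D}A$ relative to $\mathcal{D}B$ and $\mathcal{D}C$ induces one of $\mathcal{D}(A\oplus E)$ relative to $\mathcal{D}(B\oplus E)$ and $\mathcal{D}C$) and then concatenates the augmented series, differing only in the bookkeeping of which summand is augmented first. The small extra detail you supply, checking (R1)--(R4) componentwise, is exactly what the paper leaves as ``clear.''
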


\begin{proof}
It is clear that any $n$-recollement $$\xymatrix@!=3pc{ \mathcal{D}B
\ar[r] & \mathcal{D}A \ar@<-1ex>[l] \ar@<+1ex>[l]^-{\vdots} \ar[r] &
\mathcal{D}C \ar@<-1ex>[l] \ar@<+1ex>[l]^-{\vdots} }$$ can always
induce an $n$-recollement
$$\xymatrix@!=5pc{ \mathcal{D}(B \oplus E) \ar[r] & \mathcal{D}(A
\oplus E) \ar@<-1ex>[l] \ar@<+1ex>[l]^-{\vdots} \ar[r] &
\mathcal{D}C \ar@<-1ex>[l] \ar@<+1ex>[l]^-{\vdots} }$$ for all
algebras $A$, $B$, $C$ and $E$. Let
$$0 = \mathcal{D}B_{p,l_p} \hookrightarrow \cdots \hookrightarrow
\mathcal{D}B_{p,1} \hookrightarrow \mathcal{D}B_{p,0} =
\mathcal{D}A_p$$ be an $n$-composition-series of $\mathcal{D}A_p$
with $n$-composition-factors $\mathcal{D}C_{p,1}, \cdots, \linebreak
\mathcal{D}C_{p,l_p} = \mathcal{D}B_{p,l_p-1}$. Then they induce an
$n$-composition-series $0  = \mathcal{D}B_{m,l_m} \linebreak
\hookrightarrow \cdots \hookrightarrow \mathcal{D}B_{m,1}
\hookrightarrow \mathcal{D}B_{m,0} = \mathcal{D}A_m = \mathcal{D}A_m
\oplus \mathcal{D}B_{m-1,l_{m-1}} \hookrightarrow \mathcal{D}A_m
\oplus \mathcal{D}B_{m-1,l_{m-1}-1} \hookrightarrow \cdots
\hookrightarrow \mathcal{D}A_m \oplus \mathcal{D}B_{m-1,1}
\hookrightarrow \mathcal{D}A_m \oplus \mathcal{D}B_{m-1,0} =
\mathcal{D}A_m \oplus \mathcal{D}A_{m-1} \hookrightarrow \cdots
\cdots \hookrightarrow \mathcal{D}A$ of $\mathcal{D}A$ with
$n$-composition-factors $\mathcal{D}C_{1,1}, \cdots, \linebreak
\mathcal{D}C_{1,l_1}, \cdots \cdots, \mathcal{D}C_{m,1}, \cdots,
\mathcal{D}C_{m,l_m}$.
\end{proof}

The following result is our main theorem in this section.

\begin{theorem} \label{Theorem-JH}
Let $n$ be a positive integer and $A$ a derived discrete algebra,
say derived equivalent to $\oplus^m_{p=1}A_p$ where $m \geq 1$ and
$A_p = \Lambda(s_p,s_p,t_p)$ for $s_p \geq 1$, $t_p \geq 0$, $1 \leq
p \leq u$ and $0 \leq u \leq m$; $A_p = \Lambda(r_p,s_p,t_p)$ for
$s_p
> r_p \geq 1$, $t_p \geq 0$, $u+1 \leq p \leq v$ and $u \leq v \leq m$;
$A_p$ is a basic connected piecewise hereditary algebra of Dynkin
type for $v+1 \leq p \leq m$. Then $\mathcal{D}A$ admits an
$n$-composition-series with $n$-composition-factors
$\mathcal{D}(\Lambda (s_1,s_1,0)), \cdots , \mathcal{D}(\Lambda
(s_u,s_u,0))$, and $\mathcal{D}k$ of multiplicity $r(A)-\Sigma_{p=1}
^u s_p$. Moreover, any $n$-composition-series of $\mathcal{D}A$ has
precisely these $n$-composition-factors up to reordering and derived
equivalence.
\end{theorem}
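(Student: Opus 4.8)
For existence, the plan is to reduce to a single connected summand $A_p$ by Lemma~\ref{Lemma-DirectSumRecollement} and then peel off factors explicitly. For $p\le u$ the algebra $\Lambda(s_p,s_p,t_p)$ is a one-point extension of $\Lambda(s_p,s_p,t_p-1)$ (delete the source $-t_p$), so Lemma~\ref{Lemma-One-Point-CoExtension} together with Proposition~\ref{Proposition-n-recollement} produces an $n$-recollement with factor $\mathcal{D}k$; iterating $t_p$ times reaches the $n$-derived-simple algebra $\Lambda(s_p,s_p,0)$ (Proposition~\ref{Proposition-2-TrunCycAlg}), giving $\mathcal{D}k$ with multiplicity $t_p=r(A_p)-s_p$ and one copy of $\mathcal{D}(\Lambda(s_p,s_p,0))$. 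For $u<p\le v$ I would again peel off $t_p$ copies of $\mathcal{D}k$, then apply Proposition~\ref{Proposition-DropProjMod} at the vertex $0$ of $\Lambda(r_p,s_p,0)$ (legitimate precisely because $s_p>r_p$, exactly as in the proof of Theorem~\ref{Theorem-DerivedSimple}) to split off $\mathcal{D}k$ and a derived discrete (Proposition~\ref{Proposition-DerivedDiscrete}) algebra of finite global dimension and strictly smaller rank; an induction on the rank, using Lemma~\ref{Lemma-DirectSumRecollement} on the possibly disconnected remainder, yields an all-$\mathcal{D}k$ series of length $r(A_p)$. For $v<p\le m$ the all-$\mathcal{D}k$ series of length $r(A_p)$ is furnished by \cite[Theorem 5.7 and Corollary 5.9]{AKL12}. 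Summing the multiplicities and using additivity of the rank gives exactly $\mathcal{D}(\Lambda(s_1,s_1,0)),\dots,\mathcal{D}(\Lambda(s_u,s_u,0))$ together with $r(A)-\sum_{p=1}^u s_p$ copies of $\mathcal{D}k$.

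For uniqueness the idea is to realise the ``cycle part'' as a derived invariant. For a derived discrete algebra $D\simeq\oplus_p A_p$, let $\mathcal{S}(D)$ be the multiset of periods $s_p$ of its infinite-global-dimension connected summands $\Lambda(s_p,s_p,t_p)$. I claim $\mathcal{S}(D)$ is intrinsic to $\mathcal{D}^b(D)$: by Lemma~\ref{Lemma-Hom-RHom} the relation $X\sim Y\Leftrightarrow\RHom_D(X,Y)\ncong0$ on noncompact indecomposables (up to shift) is an equivalence relation whose classes are exactly the cycle blocks, and within a class the common period $\min\{h>0:\Hom_{\mathcal{D}D}(X,X[h])\ne0\}$ equals $s_p$. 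The theorem would then follow from two facts: rank additivity $r(D)=r(B)+r(C)$ in a recollement \cite[Proposition 6.5]{AKLY13}, and the additivity $\mathcal{S}(D)=\mathcal{S}(B)\sqcup\mathcal{S}(C)$. Indeed, inducting on $r(A)$ along the first step $\mathcal{D}B_1\hookrightarrow\mathcal{D}A\to\mathcal{D}C_1$ of an arbitrary series: the inductive hypothesis fixes the factors of $\mathcal{D}B_1$, $\mathcal{S}$-additivity forces the cycle factors of the whole series to be $\mathcal{S}(A)=\{s_1,\dots,s_u\}$, and rank additivity then forces the number of $\mathcal{D}k$-factors to be $r(A)-\sum_{s\in\mathcal{S}(A)}s$, matching the existence computation independently of the chosen series.

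The crux is therefore $\mathcal{S}(D)=\mathcal{S}(B)\sqcup\mathcal{S}(C)$, where I expect the real work. Because $D$ is derived discrete, every recollement extends to an $n$-recollement (Proposition~\ref{Proposition-n-recollement}) in which all six functors preserve compactness and restrict to $\mathcal{D}^b(\mod)$. The fully faithful $i_*$ and $j_!$ then send noncompact indecomposables to noncompact indecomposables (preserving compactness, and reflecting it via $i^*i_*\cong1$, $j^!j_!\cong1$), and preserve $\RHom$ and self-$\Hom$; hence they embed the $\sim$-classes of $B$ and of $C$ into those of $D$ period-preservingly. Moreover $\RHom_D(j_!Y,i_*X)\cong\RHom_C(Y,j^!i_*X)\cong0$ shows the two images are disjoint, giving the inclusion $\mathcal{S}(B)\sqcup\mathcal{S}(C)\subseteq\mathcal{S}(D)$. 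The opposite inclusion is a ``coverage'' statement: every noncompact indecomposable $Z$ of $\mathcal{D}^b(D)$ must be $\sim$-equivalent to some $i_*X$ or some $j_!Y$. I would obtain this from the recollement triangle $j_!j^!Z\to Z\to i_*i^*Z\to$: since compact objects are closed under triangles while $i_*$ and $j_!$ preserve compactness, at least one of $i^*Z,\ j^!Z$ is noncompact, hence has a noncompact indecomposable summand $X$ (resp.\ $Y$); the unit $Z\to i_*i^*Z$ (resp.\ counit $j_!j^!Z\to Z$) composed with the split projection (resp.\ inclusion) onto $i_*X$ (resp.\ from $j_!Y$) is nonzero, so $\RHom_D(Z,i_*X)\ne0$ (resp.\ $\RHom_D(j_!Y,Z)\ne0$) and $Z$ lies in the corresponding class.

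Combining the two inclusions produces a period-preserving bijection between the $\sim$-classes of $B\sqcup C$ and those of $D$, which is exactly the additivity required; the connected instance of this rigidity is already recorded in Lemma~\ref{Lemma-Recollement-B-C}. The main obstacle is precisely the coverage step: controlling the noncompactness of $i^*Z$ and $j^!Z$ and extracting the nonzero comparison map. The two ingredients that make it go through are the compactness-preservation afforded by Proposition~\ref{Proposition-n-recollement} (valid only because $D$ is derived discrete) and the orthogonality and self-extension periodicity of noncompact indecomposables recorded in Lemma~\ref{Lemma-Hom-RHom}, which is what guarantees that $\sim$ really is an equivalence relation cutting $\mathcal{D}^b(D)$ into cycle blocks.
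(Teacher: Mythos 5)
Your proposal is correct, and while your existence argument is essentially the paper's (reduce to connected summands via Lemma~\ref{Lemma-DirectSumRecollement}, peel off one-point extensions, use Proposition~\ref{Proposition-DropProjMod} at vertex $0$ when $s_p>r_p$, and handle the Dynkin pieces by triangularity or by citing \cite{AKL12}), your uniqueness argument takes a genuinely different route. The paper first invokes the proof of \cite[Corollary 3.4]{LY12} to split the first recollement of an arbitrary series into a direct sum of recollements over the connected summands $A_p$, observes that connectedness of the $n$-derived-simple $C_1$ confines the action to a single summand $A_q$, and then runs a three-case analysis driven by Lemma~\ref{Lemma-Recollement-B-C} before inducting on $r(A)$. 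You instead isolate the multiset $\mathcal{S}(D)$ of periods of the infinite-global-dimension cycle blocks, show via Lemma~\ref{Lemma-Hom-RHom} that it is read off intrinsically from the noncompact indecomposables of $\mathcal{D}^b(D)$, and prove the additivity $\mathcal{S}(D)=\mathcal{S}(B)\sqcup\mathcal{S}(C)$ across any recollement of derived discrete algebras; together with rank additivity from \cite[Proposition 6.5]{AKLY13} this pins down all composition factors by the same induction on $r(A)$ without any case split. The one genuinely new ingredient, your ``coverage'' step, is sound: Proposition~\ref{Proposition-n-recollement} guarantees that all six functors preserve compactness and restrict to $\mathcal{D}^b(\mod)$, so for a noncompact indecomposable $Z$ the triangle $j_!j^!Z\to Z\to i_*i^*Z\to$ and thickness of the compacts force one of $i^*Z$, $j^!Z$ to be noncompact, and the adjunction isomorphisms $\Hom_D(Z,i_*X)\cong\Hom_B(i^*Z,X)$ and $\Hom_D(j_!Y,Z)\cong\Hom_C(Y,j^!Z)$ applied to the split projection/inclusion of a noncompact indecomposable summand produce the required nonzero map. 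What your approach buys is a cleaner conceptual invariant and independence from \cite[Corollary 3.4]{LY12}; what the paper's buys is reuse of Lemma~\ref{Lemma-Recollement-B-C}, which it needs anyway for Theorem~\ref{Theorem-DerivedSimple}, and slightly more explicit control of the algebra $B_1$ (not just its derived category) at each stage.
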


\begin{proof}
For $v+1 \leq p \leq m$, $A_p$ is a basic connected piecewise
hereditary algebra of Dynkin type. By \cite[Theorem 1.1 (i)]{HRS96},
$A_p$ is {\it triangular}, i.e., its quiver has no oriented cycles.
Thus $A_p$ can be constructed from $k$ by $r(A_p)-1$ times of
one-point extensions. It follows from
Lemma~\ref{Lemma-One-Point-CoExtension},
Proposition~\ref{Proposition-DerivedDiscrete} and
Proposition~\ref{Proposition-n-recollement} that $\mathcal{D}A_p$
admits an $n$-composition-series with $n$-composition-factors
$\mathcal{D}k$ of multiplicity $r(A_p)$.

For $u+1 \leq p \leq v$, $A_p = \Lambda(r_p,s_p,t_p)$ with $s_p >
r_p \geq 1$ and $t_p \geq 0$. The algebra $\Lambda(r_p,s_p,t_p)$ can
be constructed from $\Lambda(r_p,s_p,0)$ by $t_p$ times one-point
extensions. For $\Lambda(r_p,s_p,0)$, since $s_p > r_p$, applying
Proposition~\ref{Proposition-DropProjMod} to the idempotent element
$e$ corresponding to the vertex $0$ of the quiver $Q(s_p,0)$ and
also Proposition~\ref{Proposition-n-recollement}, we get an
$n$-recollement of $\mathcal{D}\Lambda (r_p,s_p,0)$ relative to
$\mathcal{D}E_p$ and $\mathcal{D}k$ where $E_p$ is a Nakayama
algebra whose quiver is a line quiver. The algebra $E_p$ can be
constructed from $k$ by $r(E_p) - 1 = s_p - 2$ times of one-point
extensions. Therefore, $\mathcal{D}\Lambda(r_p,s_p,t_p)$ admits an
$n$-composition-series with $n$-composition-factors $\mathcal{D}k$
of multiplicity $r(\Lambda(r_p,s_p,t_p))=t_p+s_p$.

For $1 \leq p \leq u$, $A_p = \Lambda(s_p,s_p,t_p)$ with $s_p \geq
1$ and $t_p \geq 0$. The algebra $\Lambda(s_p,s_p,t_p)$ can be
constructed from $\Lambda(s_p,s_p,0)$ by $t_p$ times one-point
extensions. Thus $\mathcal{D}\Lambda (s_p,s_p,t_p)$ admits an
$n$-composition-series with $n$-composition-factors
$\mathcal{D}(\Lambda(s_p,s_p,0))$ and $\mathcal{D}k$ of multiplicity
$t_p$.

By the above analyzes and Lemma~\ref{Lemma-DirectSumRecollement}, we
know $\mathcal{D}A$ admits an $n$-composition-series with
$n$-composition-factors $\mathcal{D}(\Lambda (s_1,s_1,0)), \cdots ,
\mathcal{D}(\Lambda (s_u,s_u,0))$, and $\mathcal{D}k$ of
multiplicity $r(A) - \Sigma_{p=1} ^u s_p$.

Next we prove these $n$-composition-factors of $\mathcal{D}A$ are
independent of the choice of $n$-composition-series by induction on
$r(A)$. If $r(A) = 1$ then $A$ is local. Thus $A$ is
$n$-derived-simple. By Theorem~\ref{Theorem-DerivedSimple}, $A$ is
derived equivalent to either $k$ or $\Lambda(1,1,0)$. In either
case, we have nothing to say. Assume $r(A) \geq 2$ and the statement
holds for all algebras $B$ with $r(B) < r(A)$. For any
$n$-composition-series
$$(*) \;\;\;\;\;\;\; 0 = \mathcal{D}B_l \hookrightarrow \mathcal{D}B_{l-1}
\hookrightarrow \cdots \hookrightarrow \mathcal{D}B_1
\hookrightarrow \mathcal{D}B_0 = \mathcal{D}A $$ of $\mathcal{D}A$,
we assume that the triangle functor $\mathcal{D}B_1 \hookrightarrow
\mathcal{D}A$ is completed to an $n$-recollement
$$\xymatrix@!=4pc{ \mathcal{D}B_{1} \ar[r] & \mathcal{D}A
\ar@<-1ex>[l] \ar@<+1ex>[l]^-{\vdots} \ar[r] & \mathcal{D}C_1
\ar@<-1ex>[l] \ar@<+1ex>[l]^-{\vdots} }$$ where $C_1$ is
$n$-derived-simple. Then $C_1$ is derived discrete by
Proposition~\ref{Proposition-DerivedDiscrete}. Thus up to derived
equivalence $C_1 = k$ or $\Lambda(s,s,0)$ for some integer $s \geq
1$ by Theorem~\ref{Theorem-DerivedSimple}.

According to the proof of \cite[Corollary 3.4]{LY12}, we get $B_1 =
\oplus_{p=1}^m B_{1,p}$, $C_1 = \oplus_{p=1}^m C_{1,p}$ and the
above $n$-recollement can be decomposed as the direct sum of
$n$-recollements
$$\xymatrix@!=4pc{ \mathcal{D}B_{1,p} \ar[r] & \mathcal{D}A_p
\ar@<-1ex>[l] \ar@<+1ex>[l]^-{\vdots} \ar[r] & \mathcal{D}C_{1,p}.
\ar@<-1ex>[l] \ar@<+1ex>[l]^-{\vdots} }$$ The $n$-derived-simplicity
of $C_1$ implies that $C_1$ is connected. Therefore, there exists
some $q \in \{1,2, \cdots, m\}$ such that $C_1 = C_{1,q}$ and
$\mathcal{D}B_{1,p} \simeq \mathcal{D}A_p$ for all $p \neq q$. Up to
derived equivalence, we may assume $B_1 = (\oplus_{p \neq q} A_p)
\oplus B_{1,q}$.

If $1 \leq q \leq u$ then $A_q = \Lambda(s_q,s_q,t_q)$. By
Lemma~\ref{Lemma-Recollement-B-C}, we have either $C_1 = k$ and
$B_{1,q} = \Lambda (s_q,s_q,t'_q) \oplus B'_{1,q}$ for some integer
$0 \leq t'_q \leq t_q$ and some algebra $B'_{1,q}$ with $\gl
B'_{1,q} < \infty$, or $C_1 = \Lambda (s_q,s_q,0)$ and $\gl B_{1,q}
< \infty$. If $u+1 \leq q \leq m$ then $\gl A_q < \infty$. Thus $C_1
= k$ and $\gl B_{1,q} < \infty$. Let's discuss these three cases in
detail.

\medskip

{\it Case 1.} $A_q = \Lambda(s_q,s_q,t_q)$, $C_1 = k$ and $B_{1,q} =
\Lambda (s_q,s_q,t'_q) \oplus B'_{1,q}$ with $\gl B'_{1,q} <
\infty$.

\medskip

In this case $B_1 = (\oplus_{p \neq q}A_p) \oplus B_{1,q} =
(\oplus_{p \neq q}A_p) \oplus \Lambda(s_q,s_q,t'_q) \oplus
B'_{1,q}$, and $r(B_1) = r(A)-1$. By induction assumption, the
$n$-composition-series
$$0 = \mathcal{D}B_l \hookrightarrow \mathcal{D}B_{l-1}
\hookrightarrow \cdots \hookrightarrow \mathcal{D}B_2
\hookrightarrow \mathcal{D}B_1$$ of $\mathcal{D}B_1$ has exactly
$n$-composition-factors $\mathcal{D}(\Lambda (s_p,s_p,0))$ with $1
\leq p \leq u$ and $\mathcal{D}k$ of multiplicity $r(B_1) -
\Sigma_{p=1}^u s_p$ up to reordering and derived equivalence. Thus
the $n$-composition-series $(*)$ of $\mathcal{D}A$ has exactly
$n$-composition-factors $\mathcal{D}(\Lambda (s_p,s_p,0))$ with $1
\leq p \leq u$ and $\mathcal{D}k$ of multiplicity $r(B_1) -
\Sigma_{p=1}^u s_p + 1 = r(A) - \Sigma_{p=1}^us_p$  up to reordering
and derived equivalence.

\medskip

{\it Case 2.} $A_q = \Lambda(s_q,s_q,t_q)$, $C_1 = \Lambda
(s_q,s_q,0)$ and $\gl B_{1,q} < \infty$.

\medskip

In this case $B_1 = (\oplus_{p \neq q}A_p) \oplus B_{1,q}$ with $\gl
B_{1,q} < \infty$, and $r(B_1) = r(A)-s_q$. By induction assumption,
the $n$-composition-series
$$0 = \mathcal{D}B_l \hookrightarrow \mathcal{D}B_{l-1}
\hookrightarrow \cdots \hookrightarrow \mathcal{D}B_2
\hookrightarrow \mathcal{D}B_1$$ of $\mathcal{D}B_1$ has exactly
$n$-composition-factors $\mathcal{D}(\Lambda (s_p,s_p,0))$ with $1
\leq p \leq u$ and $p \neq q$, and $\mathcal{D}k$ of multiplicity
$r(B_1) - \Sigma_{p \neq q}s_p$, up to reordering and derived
equivalence. Thus the $n$-composition-series $(*)$ of $\mathcal{D}A$
has exactly $n$-composition-factors $\mathcal{D}(\Lambda
(s_p,s_p,0))$ with $1 \leq p \leq u$ and $\mathcal{D}k$ of
multiplicity $r(B_1) - \Sigma_{p \neq q} s_p = r(A) - \Sigma_{p=1}^u
s_p$ up to reordering and derived equivalence.

\medskip

{\it Case 3.} $\gl A_q < \infty$, $C_1 = k$ and $\gl B_{1,q} <
\infty$.

\medskip

In this case $B_1 = (\oplus_{p \neq q}A_p) \oplus B_{1,q}$ with $\gl
B_{1,q} < \infty$ and $r(B_1) = r(A)-1$. By induction assumption,
the $n$-composition-series
$$0 = \mathcal{D}B_l \hookrightarrow \mathcal{D}B_{l-1}
\hookrightarrow \cdots \hookrightarrow \mathcal{D}B_2
\hookrightarrow \mathcal{D}B_1$$ of $\mathcal{D}B_1$ has exactly
$n$-composition-factors $\mathcal{D}(\Lambda (s_p,s_p,0))$ with $1
\leq p \leq u$ and $\mathcal{D}k$ of multiplicity $r(B_1) -
\Sigma_{p=1}^u s_p$ up to reordering and derived equivalence. Thus
the $n$-composition-series $(*)$ of $\mathcal{D}A$ has exactly
$n$-composition-factors $\mathcal{D}(\Lambda (s_p,s_p,0))$ with $1
\leq p \leq u$ and $\mathcal{D}k$ of multiplicity $r(B_1) -
\Sigma_{p=1}^u s_p +1 = r(A) - \Sigma_{p=1}^u s_p$ up to reordering
and derived equivalence.

Now we finish the proof of the theorem.
\end{proof}

\begin{remark}{\rm For $n = 1$ (resp. 2, 3), Theorem~\ref{Theorem-JH}
implies that the Jordan-H\"{o}lder theorem holds for the unbounded
(resp. upper bounded or lower bounded, bounded) derived category of
any derived discrete algebra. }\end{remark}

\medskip

\noindent {\footnotesize {\bf ACKNOWLEDGMENT.}
The author is grateful to her supervisor Professor Yang Han for his guidance.
This work is supported by Project 11171325 NSFC.}

\footnotesize


\begin{thebibliography}{99}

\bibitem{AKLO12} L. Angeleri H\"{u}gel, S. K\"{o}nig and Q. Liu,
On the uniqueness of stratifications of derived module categories,
J. Algebra 359 (2012), 120--137.

\bibitem{AKL12} L. Angeleri H\"{u}gel, S. K\"{o}nig and Q. Liu,
Jordan-H\"{o}lder theorems for derived module categories of
piecewise hereditary algebras, J. Algebra 352 (2012), 361--381.

\bibitem{AKLY13} L. Angeleri H\"{u}gel, S. K\"{o}nig, Q. Liu and D.
Yang, Derived simple algebras and restrictions of recollements of
derived module categories, arXiv:1310.3479v1 [math.RT].

\bibitem{ASS06} I. Assem, D. Simson and A. Skowro\'{n}ski, Elements
of the representation theory of associative algebras, 1 : Techniques
of representation theory, London Mathematical Society Student Texts,
vol. 65, Cambridge University Press, Cambridge, 2006.

\bibitem{BM03} V. Bekkert and H. Merklen, Indecomposables in derived
categories of gentle algebras, Alg. Rep. Theory 6 (2003), 285--302.

\bibitem{BBD82} A.A. Beilinson, J. Bernstein and P. Deligne,
Faisceaux pervers, Ast\'{e}risque, vol. 100, Soc. Math. France,
1982.

\bibitem{Bobi11} G. Bobi\'{n}ski, The graded centers of derived
discrete algebras, J. Algebra 333 (2011), 55--66.

\bibitem{BGS04} G. Bobi\'{n}ski, C. Gei{\ss} and A. Skowro\'{n}ski,
Classification of discrete derived categories, Central Eur. J. Math.
2 (2004), 19--49.

\bibitem{BK14} G. Bobi\'{n}ski and H. Krause,
The Krull-Gabriel dimension of discrete derived categories, Bull.
Sci. Math. To appear.

\bibitem{BN}  M. B\"{o}kstedt and A. Neeman, Homotopy limits in
triangulated categories, Compositio Math. 86 (1993), 209--234.

\bibitem{BPP13} N. Broomhead, D. Pauksztello and D. Ploog,
Discrete derived categories I: homomorphisms, autoequivalences and
$t$-structures, arXiv:1312.5203v1 [math.RT].

\bibitem{BPP14} N. Broomhead, D. Pauksztello and D. Ploog,
Discrete derived categories II: The silting pairs CW complex and the
stability manifold, arXiv:1407.5944v1 [math.RT].

\bibitem{CX11} H.X. Chen and C.C. Xi, Stratifications of
derived categories from tilting modules over tame hereditary
algebras, arXiv:1107.0444 [math.RT].

\bibitem{CX12} H.X. Chen and C.C. Xi, Good tilting modules
and recollements of derived module categories, Proc. Lond. Math.
Soc. 104 (2012), no. 5, 959--996.

\bibitem{CX14} H.X. Chen and C.C. Xi, Recollements of derived
categories III: Finitistic dimensions, arXiv:1405.5090v1 [math.RA].

\bibitem{Han14} Y. Han, Recollements and Hochschild theory, J. Algebra 397 (2014), 535--547.

\bibitem{HQ14} Y. Han and Y.Y. Qin, Reducing homological
conjectures by $n$-recollements, arXiv:1410.3223v2 [math.RT].

\bibitem{HZ13} Y. Han and C. Zhang, Brauer-Thrall type theorems
for derived category, arXiv:1310.2777v1 [math.RT].

\bibitem{HanZ13} Z. Han, Homotopy categories of injective modules
over derived discrete algebras, arXiv:1308.2334v1 [math.RT].

\bibitem{Hap88} D. Happel, Triangulated categories in the representation
theory of finite dimensional algebras, London Math. Soc. Lectue
Notes Ser. 119, 1988.

\bibitem{Happ91} D. Happel, A family of algebras with two simple
modules and Fibonacci numbers, Arch. Math. (Basel) 57 (1991), no. 2,
133--139.

\bibitem{Hap93} D. Happel, Reduction techniques for homological
conjectures, Tsukuba J. Math. 17 (1993), 115--130.

\bibitem{HRS96} D. Happel, I. Reiten, and S.O. Smal{\O},
Piecewise hereditary algebras, Arch. Math. (Basel) 66 (1996),
182--186.

\bibitem{IZ92} K. Igusa and D. Zacharia, On the cyclic homology of
monomial relation algebras, J. Algebra 151 (1992), 502--521.

\bibitem{Kel98} B. Keller, Invariance and localization for cyclic homology of
DG algebras, J. Pure Appl. Algebra 123 (1998), no. 1--3, 223--273.

\bibitem{Koe91} S. K\"{o}nig, Tilting complexes, perpendicular categories
and recollements of derived module categories of rings, J. Pure
Appl. Algebra 73 (1991) 211--232.

\bibitem{LY12} Q. Liu and D. Yang, Blocks of group algebras are
derived simple, Math. Z. 272 (2012), 913--920.

\bibitem{LY13} Q. Liu and D. Yang, Stratifications of algebras with
two simple modules, Forum Math. To appear.

\bibitem{Nee04} A. Neeman and A. Ranicki, Noncommutative localisation
in algebraic K-theory I, Geometry and topology 8 (2004), 1385--1425.

\bibitem{Rin84} C.M. Ringel, Tame algebras and integral quadratic forms,
Lecture Notes in Math., vol. 1099, Springer-Verlag, Berlin, 1984.

\bibitem{Sch06} M. Schlichting, Negative K-theory of derived
categories, Math. Z. 253 (2006), no. 1, 97--134.

\bibitem{TT90} R.W. Thomason and T. Trobaugh, Higher algebraic
K-theory of schemes and of derived categories, in: The Grothendieck
Festschrift, III, Birkh\"{a}user, Progress Math. 87 (1990),
247--436.

\bibitem{Voss01} D. Vossieck, The algebras with discrete derived
category, J. Algebra 243 (2001), 168--176.

\bibitem{Wie91} A. Wiedemann, On stratifications of derived module
categories, Canad. Math. Bull. 34 (1991), 275--280.

\bibitem{Y14} D. Yang, Algebraic stratifications and derived
simplicity, Plenary talk on ICRA XVI, 2014.

\bibitem{Yao92} D. Yao, Higher algebraic K-theory of admissible
abelian categories and localization theorems, J. Pure Appl. Algebra
77 (1992), 263--339.

\end{thebibliography}
\end{document}